\newtheorem{thm}{Theorem}[section]
\newtheorem{lem}{Lemma}[section]
\theoremstyle{definition}
\theoremstyle{remark}
\newtheorem{rem}{Remark}[section]
\numberwithin{equation}{section}
\numberwithin{equation}{section}
\newcounter{saveeqn}
\title[A novel electromagnetic imaging scheme]{Invisibility enables super-visibility in electromagnetic imaging}
\author{Youzi He}
\address{Department of Mathematics, The Chinese University of Hong Kong, Shatin, Hong Kong SAR, China}
\email{yolanda19he@gmail.com}
\author{Hongjie Li}
\address{Department of Mathematics, The Chinese University of Hong Kong, Shatin, Hong Kong SAR, China}
\email{hjli@math.cuhk.edu.hk}
\author{Hongyu Liu}
\address{Department of Mathematics, City University of Hong Kong, Kowloon, Hong Kong SAR, China}
\email{hongyu.liuip@gmail.com}
\author{Xianchao Wang}
\address{School of Mathematics, Harbin Institute of Technology, Harbin, China}
\email{xcwang90@gmail.com}
\date{} 
\begin{document}
\maketitle

\begin{abstract}

This paper is concerned with the inverse electromagnetic scattering problem for anisotropic media. We use the interior resonant modes to develop an inverse scattering scheme for imaging the scatterer. The whole procedure consists of three phases. First, we determine the interior Maxwell transmission eigenvalues of the scatterer from a family of far-field data by the mechanism of the linear sampling method. Next, we determine the corresponding transmission eigenfunctions by solving a constrained optimization problem. Finally, based on both global and local geometric properties of the transmission eigenfunctions, we design an imaging functional which can be used to determine the shape of the medium scatterer. We provide rigorous theoretical basis for our method. Numerical experiments verify the effectiveness, better accuracy and super-resolution results of the proposed scheme.

\medskip

\noindent{\bf Keywords:}~~ inverse electromagnetic scattering, anisotropic media, transmission eigenfunctions, geometric structures, super-resolution

\noindent{\bf 2010 Mathematics Subject Classification:}~~35Q60, 35P25, 35R30, 78A40

\end{abstract}

\section{Introduction}

\subsection{Mathematical setup}

We present the mathematical formulation of the forward and inverse electromagnetic scattering problems.

Let $D\subset \mathbb{R}^3$ be the support of an anisotropic inhomogeneity, where $D$ is a bounded, simply connected Lipschitz domain with a piecewise smooth boundary. Next, we introduce the medium configuration, which is characterized by two material parameters including the electric permittivity $\bm{\varepsilon}$ and the magnetic permeability $\bm{\mu}$. It is assumed that $\bm{\mu}(\bm{x})\equiv\mu_0 \bm{I}$, $\bm{x}\in\mathbb{R}^3$, where $\mu_0$ is a positive constant and $\bm{I}$ is the $3\times 3$ identity matrix.  Let the electric permittivity $\bm{\varepsilon} \in (L^{\infty}( \mathbb{R}^3))^{3\times 3} $ be a uniformly symmetric-positive-definite bounded matrix-valued function. It is assumed that $\bm{\varepsilon}(\bm{x})=\varepsilon_0\bm{I}$ for $\bm{x}\in\mathbb{R}^3\backslash\overline{D}$, where $\varepsilon_0$ is a positive constant. Define the wavenumber $k$ to be
\[
k^2 := \varepsilon_0 \mu_0 \omega^2,
\]
where $\omega\in\mathbb{R}_+$ signifies the temporal frequency of the electromagnetic waves. We define the matrix index of refraction of the anisotropic medium by
\begin{equation}\label{eq:ind_ref}
\bm{N}(\bm{x}):=\frac{\bm{\varepsilon}(\bm{x}) }{\varepsilon_0},\quad \bm{x}\in \overline{D},
\end{equation}
and assume that $(\bm{N}-\bm{I})^{-1}$ is bounded and satisfies
\begin{equation}\label{eq:mat_posi_def}
((\bm{N}-\bm{I})^{-1}\bm{\xi},\bm{\xi} )\geq \alpha |\bm{\xi}|^2 \quad \text{for any }\bm{\xi}\in\mathbb{C}^3 \text{  a.e.  in }D
\end{equation}
for some constant $\alpha >0$. Clearly, all the results of this paper also hold true in the particular case of isotropic media, i.e. $\bm N(\bm x)=n(x)\bm I$, where $n(x)$ is a positive piecewise smooth function in $\overline{D}$. We take the time-harmonic electromagnetic incident fields $\bm{E}^i$, $\bm{H}^i$ of the form:
\begin{equation}\label{eq:incEH}
\begin{split}
\bm{E}^i(\bm{x},\bm{d},\bm{q})&=\frac{\mathrm{i}}{k}\text{curl curl}\, \bm{q}\mathrm{e}^{\mathrm{i}k \bm{x}\cdot \bm{d}}
               =\mathrm{i}k( \bm{d}\times \bm{q})\times \bm{d} \mathrm{e}^{\mathrm{i}k \bm{x}\cdot \bm{d}},\\
\bm{H}^i( \bm{x}, \bm{d}, \bm{q})&=\text{curl}\, \bm{q}\mathrm{e}^{\mathrm{i}k \bm{x}\cdot \bm{d}}
               =\mathrm{i}k \bm{d}\times \bm{q}\mathrm{e}^{\mathrm{i}k \bm{x}\cdot \bm{d}},
\end{split}
\end{equation}
where $\mathrm{i}:=\sqrt{-1}$ is the imaginary unit, $\bm{d} \in \mathbb{S}^2:=\{x \in \mathbb{R}^3: |\bm{x}|=1 \}$ is the direction of propagation, and $\bm q \in \mathbb{R}^3$ is the polarization vector satisfying $\bm{q}\perp \bm{d}$. The wave scattering of $\bm{E}^i$, $\bm{H}^i$ by the medium scatterer $(D, \bm{N})$ leads to the following Maxwell equations for the interior electric and magnetic fields $\bm{E}, \bm{H}$, and the scattered electric and magnetic fields $\bm{E}^s, \bm{H}^s$:
\begin{equation}\label{eq:directPro}
\left\{
\begin{alignedat}{3}
&\text{curl}\, \bm{E}-\mathrm{i}k\bm{H}=0, \quad  &&\text{curl}\,\bm{H}+\mathrm{i}k \bm{N}(\bm{x}) \bm{E}=0 \quad &&\text{in }D, \medskip \\
&\text{curl}\, \bm{E}^s-\mathrm{i}k\bm{H}^s=0, \quad &&\text{curl}\, \bm{H}^s+\mathrm{i}k \bm{E}^s=0 \quad &&\text{in }\mathbb{R}^3\setminus \overline{D}, \medskip \\
&\bm{\nu}\times (\bm{E}^s+\bm{E}^i)-\bm{\nu}\times \bm{E}=0,\quad  &&\bm{\nu}\times (\bm{H}^s+\bm{H}^i)-\bm{\nu}\times \bm{H}=0\quad &&\text{on} \ \partial D, \medskip \\
& \lim\limits_{|\bm{x}|\to\infty} \left(\bm{H}^s\times \bm{x}-|\bm{x}|\bm{E}^s  \right)=0.
\end{alignedat}
\right.
\end{equation}
The last limit in \eqref{eq:directPro} is known as the Silver-M\"uller radiation condition, which signifies that the scattered fields are outgoing. The well-posedness of the scattering problem \eqref{eq:incEH}-\eqref{eq:directPro} is established in \cite{KM95} for the unique existence of a pair of solutions $\bm{E}, \bm{H}\in H_{loc}(\mathrm{curl}; \mathbb{R}^3)$. If we eliminate the magnetic field in \eqref{eq:directPro}, the scattering system is reduced into:
\begin{equation*}
\left\{
\begin{alignedat}{3}
&\text{curl\,curl}\, \bm{E}-k^2 \bm N(\bm x)\bm{E}=0,  \quad &&\text{in }D, \medskip \\
&\text{curl\,curl}\, \bm{E}^s-k^2\bm{E}^s=0, \quad &&\text{in }\mathbb{R}^3\setminus \overline{D}, \medskip \\
&\bm{\nu}\times (\bm{E}^s+\bm{E}^i)-\bm{\nu}\times \bm{E}=0,\quad &&\text{on} \ \partial D, \medskip \\
&\bm{\nu}\times (\text{curl}\, \bm{E}^s+\text{curl}\, \bm{E}^i)-\bm{\nu}\times \text{curl}\, \bm{E}=0,\quad &&\text{on} \ \partial D, \medskip \\
& \lim\limits_{|\bm{x}|\to\infty} \left(\text{curl}\, \bm{E}^s\times \bm{x}- \mathrm{i}k|\bm{x}|\bm{E}^s  \right)=0.
\end{alignedat}
\right.
\end{equation*}
with the unit outward normal $\bm{\nu}$ on $\mathbb{S}^2$.
The Silver-M\"uller radiation condition leads to the following asymptotic behavior of the scattered fields:
\begin{equation*}
\bm{Q}^s(\bm{x})=\frac{\mathrm{e}^{\mathrm{i}k|\bm{x}|}}{|\bm{x}|} \left\{ \bm{Q}_{\infty}(\hat{\bm{x}})+\mathcal{O}\left(\frac{1}{|\bm{x}|} \right) \right\},  \quad |\bm{x}|\to \infty, \quad \bm{Q}:=\bm{E}, \bm{H},
\end{equation*}
uniformly in all directions $\hat{\bm{x}}=\bm{x}/|\bm{x}|\in\mathbb{S}^2$. The vector fields $\bm{E}_{\infty}$ and $\bm{H}_{\infty}$ defined on the unit sphere $\mathbb{S}^2$ are known as the electric far-field pattern and magnetic far-field pattern, respectively. They satisfy
\begin{equation*}
\bm{H}_{\infty}=\bm{\nu}\times \bm{E}_{\infty} \quad \text{and} \quad \bm{\nu}\cdot \bm{E}_{\infty}=\bm{\nu}\cdot \bm{H}_{\infty}=0.
\end{equation*}
The inverse scattering problem that we are concerned with is to recover the medium scatterer $(D, \bm{N})$ by knowledge of the associated far-field data, which can be formulated as the following nonlinear operator equation:
\begin{equation}\label{eq:nonlin_ope_eqn}
\mathcal{F}(D,\bm{N})=\bm{E}_{\infty}(\hat{\bm{x}},\bm{d},\bm{q}; k),\quad   \hat{\bm{x}}\in\mathbb{S}^{2},\ \bm{d}\in\mathbb{S}^{2}, \ \bm{q}\in\mathbb{R}^{3},  \ k\in I:=[k_0,k_1],
\end{equation}
where $\mathcal{F}$ is defined by the Maxwell system \eqref{eq:incEH}-\eqref{eq:directPro}. We shall mainly consider recovering the shape of the scatterer, namely $D$, independent of its medium content $\bm{N}$.

\subsection{Overview of the proposed method and discussion}

The inverse scattering problem \eqref{eq:nonlin_ope_eqn} lies at the heart of many industrial applications including radar/sonar, medical imaging and nondestructive testing. Many methods have been developed for this practically important inverse problem and we refer to \cite{Ammari15, CC06, ColtonKress19} and the references cited therein for the existing developments in the literature. In principle, most of the methods make use of the ``visible patterns" for the reconstruction. Here, by visible patterns, we mean that the scattering data that one can observe in the far-field measurement. As a sharp contrast, we propose to reconstruct to scatterer by making use of ``invisible patterns", namely $\bm{E}_\infty=\bm{H}_\infty\equiv \bm{0}$.

If $\bm{E}_\infty=\bm{H}_\infty\equiv \bm{0}$, by Rellich's theorem \cite{ColtonKress19}, one has $\bm{E}^s=\bm{H}^s=\bm{0}$ in $\mathbb{R}^3\backslash\overline{D}$.
In such a case, one has by direct verifications that $\bm{E}|_{D}\in H_{loc}(\mathrm{curl}, D)$ and $\bm{E}_0=\bm{E}^i|_{D}\in H(\mathrm{curl}, D)$ fulfill the following PDE system:
\begin{equation}\label{eq:elec_TEV_Pro}
\left\{
\begin{array}{ll}
\text{curl\,curl}\,\bm{E}-k^2 \bm{N}(\bm{x})\bm{E} =0  & \text{in} \  D, \medskip \\
\text{curl\,curl}\,\bm{E}_0-k^2 \bm{E}_0 =0  & \text{in} \  D, \medskip \\
\bm{\nu} \times \bm{E} = \bm{\nu} \times \bm{E}_0  & \text{on} \  \partial D, \medskip \\
\bm{\nu} \times \text{curl}\, \bm{E} = \bm{\nu} \times \text{curl}\, \bm{E}_0 & \text{on} \  \partial D.
\end{array}
\right.
\end{equation}
The system \eqref{eq:elec_TEV_Pro} is referred to as the Maxwell interior transmission eigenvalue problem (cf. \cite{ColtonKress19}). That is, when invisibility occurs, the scattering patterns are trapped inside the scatterer $D$ (noting that outside the scatterer, $\bm{E}=\bm{E}^i$ and hence there is no scattering pattern to be observed). The proposed reconstruction scheme critically rely on the geometric properties of the transmission eigenfunctions to \eqref{eq:elec_TEV_Pro}. Roughly speaking, we have the following global and local geometric patterns of the transmission eigenfunction $\bm{E}_0$: globally, its $L^2$-energy is localized around $\partial D$; and locally, it is nearly vanishing around high-curvature point on $\partial D$, and especially around boundary point where $\bm{\nu}$ is discontinuous which can be regarded as having infinite curvature. These global and local geometric patterns of the Maxwell transmission eigenfunctions have received considerable studies recently in the literature. Since it is not the focus of the current study, we refer to \cite{BlastenLX,DLWW,DCL} for the theoretical and numerical justifications of those geometric patterns for the Maxwell transmission eigenfunctions, and we also refer to \cite{BlastenA, BlastenLLW, Blasten16, Blasten17, Blasten21, CDHLW, DDL, DJLZ, Liu20} for related studies on similar geometric patterns for the Helmholtz transmission eigenfunctions. Nevertheless, we would like to point out that all of the aforementioned geometric studies are concerned with transmission eigenfunctions associated with isotropic media. As an interesting byproduct of the current study, we numerically verify that the general geometric properties described above also hold for Maxwell transmission eigenfunctions associated with anisotropic media.

Based on the above discussion, the proposed method consists of three phases. First, we determine the interior Maxwell transmission eigenvalues of the unknown scatterer from the far-field data in \eqref{eq:nonlin_ope_eqn} by the mechanism of the linear sampling method. Next, we determine the approximations of the corresponding transmission eigenfunctions by solving a constrained optimization problem. Finally, based on both global and local geometric properties of the transmission eigenfunctions, we design an imaging functional which can be used to determine the shape of the medium scatterer. Our study follows a similar spirit in a recent paper \cite{CDHLW}, where we developed an imaging scheme for the inverse acoustic scattering problem governed by the Helmholtz system. In this paper, we extend our study to the more complicated and challenging electromagnetic scattering problem and moreover associated with anisotropic media. In order to provide a rigorous basis for the proposed method, we establish several technical results that are of independent interest to the scattering theory of electromagnetic waves. In sharp contrast, the counterparts of these technical results required in \cite{CDHLW} for the acoustic wave scattering were known in the literature.

We would like to mention that there are methods proposed for reconstructing $\mathbf{N}$ in \eqref{eq:nonlin_ope_eqn} by making use of the transmission eigenvalues $k^2$ associated with \eqref{eq:elec_TEV_Pro}. Nevertheless, in the proposed method, we make use of the quantitative geometric properties of the transmission eigenfunctions for the reconstruction. This enables us to attain several salient features, in particular the super-resolution effect of the reconstructed images. Here, we recall the Abbe diffraction which states that in modern optics, the resolution limit is roughly around half of the operating wavelength. In our numerical study, it is shown that if $D$ possesses a thin layer of high refractive index, i.e. $\bm{N}$ is relatively large around $\partial D$, then transmission eigenvalues to \eqref{eq:elec_TEV_Pro} can occur for even small $k$, and moreover, the geometric patterns mentioned earlier are more evident for the corresponding transmission eigenfunctions. That means, in such a case, even if the (unknown) scatterer is of a size much smaller than the operating wavelength $2\pi/k$ (with $k$ being a transmission eigenvalue), the proposed method can still effectively reconstruct the shape of the scatterer. Hence, in practical applications, if one can coat the scattering object with a high refractive-index material (through indirect means), super-resolution imaging can be achieved via the proposed method. This is the main reason to suggest the title of our paper that invisibility enables super-visibility in electromagnetic imaging. In addition, the super-resolution effect can also be observed in the local reconstruction of geometrically singular places of $\partial D$, say e.g. corners. Such a viewpoint has been advocated in \cite{CDHLW} for the inverse acoustic imaging and we corroborate it in this paper for the inverse electromagnetic imaging.

The rest of the paper is organized as follows. In section 2, we discuss the determination of the Maxwell transmission eigenvalues by knowledge of the far-field data. In section 3, we determine the corresponding transmission eigenfunctions. Finally, in section 4, an imaging functional is designed and several numerical examples are presented.

\section{Determination of transmission eigenvalues}\label{sec:2}

In this section, we determine the transmission eigenvalues from a knowledge of a family of the electric far-field patterns. Although this problem has been solved in previous literature \cite{CCH16}, for completeness and self-containedness, we briefly discuss the main process as well as the rationale behind the method.

For any given $\bm{z}\in\mathbb{R}^3$, let
\begin{equation*}
\bm{E}_{e,\infty}(\hat{\bm{x}},\bm{z}, \bm q; k)=\frac{\mathrm{i}k }{4\pi}(\hat{\bm{x}}\times \bm{q})\times \hat{\bm{x}}\mathrm{e}^{-\mathrm{i}k\hat{\bm{x}}\cdot \bm{z}}
\end{equation*}
be the far-field pattern of an electric dipole with source at $\bm{z}$ and polarization $\bm{q}$. The determination of the Maxwell transmission eigenvalues is based on the so-called linear sampling method (LSM) \cite{CCM11}, which is a qualitative method in inverse scattering theory. We now consider the following far-field equation
\begin{equation}\label{eq:FFEqn}
(F_k \bm{g})(\hat{\bm{x}})=\bm{E}_{e,\infty}(\hat{\bm{x}},\bm{z},\bm{q};k), \quad \bm{z},\bm{q}\in \mathbb{R}^3,\ k\in I,
\end{equation}
where the far-field operator $F_k:L_t^2(\mathbb{S}^2)\to L_t^2(\mathbb{S}^2)$ is defined by
\begin{equation}\label{eq:FFO}
(F_k \bm{g})(\hat{\bm{x}}):=\int_{\mathbb{S}^2}\bm{E}_{\infty}(\hat{\bm{x}},\bm{d},\bm{g}(\bm{d});k)\, \text{d}s(\bm{d}),
\end{equation}
with $\bm{E}_{\infty}(\hat{\bm{x}},\bm{d},\bm{g}(\bm{d});k)$ the far-field pattern corresponding to the scattering problem \eqref{eq:incEH}-\eqref{eq:directPro}.
Since $\bm{g}(\bm{d})$ is implicit in the far-field pattern $\bm{E}_{\infty}(\hat{\bm{x}},\bm{d},\bm{g}(\bm{d});k)$, we can not solve the equation \eqref{eq:FFEqn} directly.

Next, we present the numerical method to identify $\bm{g}(\bm{d})$ in  \eqref{eq:FFEqn}.
 For more details about the numerical calculation, we refer the reader to \cite{CCM11}.
Let $\bm p\in \mathbb{R}^3$ be a unit auxiliary vector such that $\bm p\times \bm d\neq 0$ for any $\bm d\in \mathbb{S}^2$. Noting that $\bm d\cdot \bm g(\bm d)=0$, then the Herglotz kernel for each incident direction $\bm d$ can  be expanded as
\begin{equation*}
  \bm g(\bm d)=g^{\theta}(\bm d)\, {\bm p}^{\theta}+g^{\phi}(\bm d)\, {\bm p}^{\phi},
\end{equation*}
where $g^{\theta}, g^{\phi}\in \mathbb{C}$ are undetermined coefficients and
\begin{equation*}
  {\bm p}^{\theta}=\frac{\bm p\times \bm d}{|\bm p\times \bm d|}, \quad
  {\bm p}^{\phi}=\frac{(\bm p\times \bm d)\times \bm d }{|(\bm p\times \bm d)\times \bm d|}
\end{equation*}
are two mutually orthogonal polarizations. Thus, the electric far-field pattern $\bm{E}_{\infty}$
can be represented by
\begin{equation*}
  \bm{E}_{\infty}(\hat{\bm{x}},\bm{d},\bm{g} (\bm{d});k)=
  g^{\theta} \bm{E}_{\infty}(\hat{\bm{x}},\bm{d}, {\bm p}^{\theta};k)
  + g^{\phi} \bm{E}_{\infty}(\hat{\bm{x}},\bm{d}, {\bm p}^{\phi};k).
\end{equation*}
Therefore, the far-field operator $F_k$ can be rewritten as
\begin{equation*}
  (F_k \bm{g})(\hat{\bm{x}}):= \int_{\mathbb{S}^2}\left(g^{\theta}(\bm d;\bm z, \bm q)\, \bm{E}_{\infty}(\hat{\bm{x}},\bm{d}, {\bm p}^{\theta};k)
   + g^{\phi}(\bm d; \bm z, \bm q)\,  \bm{E}_{\infty}(\hat{\bm{x}},\bm{d}, {\bm p}^{\phi}; k) \right)\, \mathrm{d}s(\bm d).
\end{equation*}
As the far-field equation \eqref{eq:FFEqn} is ill posed, we usually adopt Tikhonov regularization \cite{ColtonKress19} and instead solve
\begin{equation}\label{eq:Tik_regu}
(\delta \bm{I}+F_k^{\ast}F_k)\bm{g}_{\delta}=F_k^{\ast}\bm{E}_{e,\infty}(\hat{\bm{x}},\bm{z},\bm{q};k),
\end{equation}
where $\delta>0$ is the regularization parameter and $F_k^{\ast}$ is the adjoint to $F_k$.

Now we introduce the rigorous mathematical justification of the LSM for determining the transmission eigenvalues. Define the Herglotz wave functions by
\begin{equation}\label{eq:ElecHerzPair}
\bm E_{\bm g,k}(\bm x):= \int_{\mathbb{S}^2} \bm g(\bm d)\mathrm{e}^{\mathrm{i}k \bm x\cdot \bm d}\text{d}s(\bm d), \quad \bm H_{\bm g,k}(\bm x):=\frac{1}{\mathrm{i}k}\text{curl}\, \bm E_{\bm g,k}(\bm x),
\end{equation}
where $\bm g\in L_t^2(\mathbb{S}^2)$ is called the vector Herglotz kernel of the pair $(\bm E_{\bm g,k}, \bm H_{\bm g,k})$.
The existence and discreteness of infinitely real transmission eigenvalues was established in \cite{CCH16}. Moreover, an effective discriminant method is proposed to distinguish whether $k$ is a Maxwell transmission eigenvalue or not in the following lemma\cite{CCH16}.

\begin{lem}\label{lem:tev}
Let $\bm{g}_{\delta}(\cdot, \bm z)\in L_t^2(\mathbb{S}^2)$ be the Tikhonov regularized solution of the far-field equation, i.e., the solution of \eqref{eq:Tik_regu}. Let $\bm{E}_{\bm{g}_{\delta},k}$ be the electric Herglotz wave function with kernel $\bm{g}_{\delta}$ defined in \eqref{eq:ElecHerzPair}. Then, for any ball $B \subset D$, $\| \bm{E}_{\bm{g}_{\delta},k} \|_{(L^2(D))^3}$ is bounded as $\delta \to 0$ for a.e. $\bm{z}\in B$ if and only if $k$ is not a transmission eigenvalue.
\end{lem}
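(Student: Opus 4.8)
The plan is to follow the now-standard linear sampling argument for detecting transmission eigenvalues, adapting it to the anisotropic Maxwell setting. The backbone is a factorization of the far-field operator of the form $F_k = \mathcal{G}\,H$, where $H \colon L_t^2(\mathbb{S}^2) \to (L^2(D))^3$ sends a Herglotz kernel $\bm g$ to the restriction $\bm E_{\bm g,k}|_D$ of the associated Herglotz wave in \eqref{eq:ElecHerzPair}, and $\mathcal{G}$ maps an interior Maxwell field to the electric far-field pattern of the scattered wave it generates through $(D,\bm N)$. Since $\|H\bm g_\delta\|_{(L^2(D))^3} = \|\bm E_{\bm g_\delta,k}\|_{(L^2(D))^3}$, the quantity in the statement is precisely $\|H\bm g_\delta\|$, and the task is to characterize its boundedness as $\delta\to 0$.

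First I would record the structural properties needed to invoke the abstract regularization theorem (cf. \cite{CCH16, CCM11}): that $F_k$ is injective with dense range, and that $H$ has dense range while $\mathcal{G}$ is injective. Injectivity and denseness of $F_k$ reduce, via Rellich's lemma and the uniqueness theory for \eqref{eq:directPro} (well-posedness from \cite{KM95} together with the positivity assumption \eqref{eq:mat_posi_def}), to the statement that no nontrivial Herglotz pair can be an interior transmission eigenfunction at a non-eigenvalue $k$; the dense range of $H$ follows from the completeness of Maxwell Herglotz waves in the natural solution space on $D$. This is the point where the anisotropy of $\bm N$ enters, and I would lean on \eqref{eq:mat_posi_def} to retain the Fredholm and coercivity structure of the interior transmission problem \eqref{eq:elec_TEV_Pro}.

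With the factorization in hand, the abstract theorem of Arens--Cakoni--Colton--Haddar type asserts that the Tikhonov-regularized solution $\bm g_\delta$ of \eqref{eq:Tik_regu} satisfies $\|H\bm g_\delta\|$ bounded as $\delta\to 0$ if and only if the right-hand side $\bm E_{e,\infty}(\cdot,\bm z;k)$ lies in the range of $\mathcal{G}$. It then remains to identify this range condition with the dichotomy on $k$. For the ``if'' direction, when $k$ is not a transmission eigenvalue the interior transmission problem \eqref{eq:elec_TEV_Pro} with the dipole field as data is uniquely solvable; the dipole far-field is attained by $\mathcal{G}$ for every $\bm z\in D$, and by denseness of Herglotz kernels one builds an approximating sequence whose interior $L^2$-norms stay bounded by the norm of that fixed solution, a bound which transfers to $\bm g_\delta$ through the near-minimality of the Tikhonov solution.

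The main obstacle is the ``only if'' direction: showing that when $k$ is a transmission eigenvalue the dipole far-field fails to lie in the range of $\mathcal{G}$ for almost every $\bm z\in B$, forcing $\|\bm E_{\bm g_\delta,k}\|$ to blow up. Here the transmission problem is no longer uniquely solvable, and membership in the range of $\mathcal{G}$ is governed by a compatibility (orthogonality) condition pairing the dipole source against the transmission eigenfunctions. The crux is to verify that this pairing is nonzero for $\bm z$ outside a set of measure zero; this is where one invokes unique continuation, so that a nontrivial eigenfunction of \eqref{eq:elec_TEV_Pro} cannot make the pairing vanish on a subset of positive measure of $B$. Combining the failure of the range condition with the regularization theorem then yields the claimed unboundedness, completing the equivalence.
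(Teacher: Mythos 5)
The paper does not actually prove Lemma~\ref{lem:tev}; it is imported verbatim from \cite{CCH16}, so there is no in-paper argument to compare yours against. Your outline is a faithful reconstruction of the standard proof in that reference: the factorization $F_k=\mathcal{G}H$ with $H$ the Herglotz operator restricted to $D$, injectivity/denseness of the factors at non-eigenvalues, solvability of the interior transmission problem with electric-dipole data to handle the ``not an eigenvalue'' direction, and an almost-everywhere argument in $\bm z$ for the eigenvalue direction. Two refinements if you were to write it out in full. First, the clean biconditional ``$\|H\bm g_\delta\|$ bounded as $\delta\to 0$ iff $\bm E_{e,\infty}(\cdot,\bm z;k)\in\mathrm{Range}(\mathcal{G})$'' is exactly the historically delicate point of the linear sampling method; in \cite{CCH16} the two directions are argued separately, and the eigenvalue direction proceeds by contradiction, extracting a weakly convergent subsequence from a hypothetically bounded family $\bm E_{\bm g_\delta,k}|_D$ and passing to the limit in the interior transmission problem, rather than by invoking a single abstract range theorem. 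Second, in that contradiction step the object that must be shown nonzero for a.e.\ $\bm z\in B$ is a fixed sesquilinear pairing of the dipole field against a nontrivial transmission eigenfunction; since this pairing is real-analytic in $\bm z$, vanishing on a positive-measure subset of $B$ forces identical vanishing and hence (via Rellich's lemma and unique continuation) triviality of the eigenfunction --- this is how the ``a.e.'' qualifier actually arises, and your appeal to unique continuation should be phrased at that level. With those adjustments your sketch matches the cited proof in all essentials.
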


\begin{rem}
There exists a constant $M>0$ such that $\| \bm{E}_{\bm{g}_{\delta},k} \|_{(L^2(D))^3}\leq M \| \bm{g}_{\delta}(\cdot, \bm z)\|_{(L^2(\mathbb{S}^2))^3}$. Then, according to Lemma \ref{lem:tev}, we note that $\| \bm{g}_{\delta}(\cdot, \bm z)\|_{(L^2(\mathbb{S}^2))^3}$ behaves quite differently whether $k$ is a Maxwell transmission eigenvalue or not. Although $\| \bm{E}_{\bm{g}_{\delta},k} \|_{(L^2(D))^3}$ also behaves differently when $k$ is a transmission eigenvalue or not, as $D$ is unknown, we therefore use $\| \bm{g}_{\delta}(\cdot, \bm z)\|_{(L^2(\mathbb{S}^2))^3}$ as an indicator to determine whether or not $k$ is a Maxwell transmission eigenvalue.
\end{rem}

As discussed above, we formulate the following scheme and summarize the procedure of determining transmission eigenvalues.
\begin{table}[h]
\begin{tabular}{p{1.5cm} p{11cm}}
\toprule
\multicolumn{2}{l}{\textbf{Algorithm \uppercase\expandafter{\romannumeral1}:}\ Transmission eigenvalues determination scheme}\\
\midrule
Step 1 & Find a unit auxiliary vector $\bm p$ and collect a pair of electric far-field data $\bm{E}_{\infty}(\hat{\bm{x}},\bm{d}, {\bm p}^{\theta};k)$ and $\bm{E}_{\infty}(\hat{\bm{x}},\bm{d}, {\bm p}^{\phi};k)$ for $(\hat{\bm{x}},\bm{d},k)\in \mathbb{S}^2\times \mathbb{S}^2   \times I$, where $I$ is a finite interval in $\mathbb{R}_{+}$.\\
Step 2 & Pick a point $\bm{z} \in D$ (a priori information), and take three independent artificial polarization vectors $\bm{q}_1=[1,0,0]^{\top},\bm{q}_2=[0,1,0]^{\top}$ and $\bm{q}_3=[0,0,1]^{\top}$. Then, for each $k\in I$, solve \eqref{eq:Tik_regu} to obtain the solutions $\bm{g}_{\delta}(\cdot, \bm{z},\bm{q}_1;k)$, $\bm{g}_{\delta}(\cdot,\bm{z},\bm{q}_2;k)$ and $\bm{g}_{\delta}(\cdot, \bm{z},\bm{q}_3;k)$.\\ 
Step 3 & Plot
$\sum_{\ell=1}^3\|\bm{g}_{\delta}(\cdot,\bm{z},\bm{q}_{\ell};k) \|_{(L^2(\mathbb{S}^2))^3}^2$
 against $k\in I$ and find the transmission eigenvalues where peaks appear in the graph.\\
\bottomrule
\end{tabular}
\end{table}


\section{Determination of transmission eigenfunctions}\label{sec:3}

 In section \ref{sec:2}, we have determined a number of Maxwell transmission eigenvalues within the interval $I$ by knowledge of  a family of the electric far-field patterns in \eqref{eq:nonlin_ope_eqn}.
 Next, we are devoted to determining the corresponding transmission eigenfunctions.
 To that end, we first show the following denseness result of the Herglotz wave in a Sobolev space.

\begin{lem}\label{le:ElcHergPrDens}(see \cite{CCM11, ColtonKress01}).
The set of Herglotz wave functions $\bm{E}_{\bm{g},k}$ defined by \eqref{eq:ElecHerzPair} is dense in $\overline{M(D)}$ with respect to the $H(\mathrm{curl},D)$ norm, where
\[
M(D):=\left\{\bm{E}\in C^2(D)\cap C^1(\overline{D}):\text{curl\,curl}\,\bm{E}=k^2\bm{E} \ \ \mathrm{ in }\ D \right\}.
\]
\end{lem}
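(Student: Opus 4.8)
The plan is to prove this density statement by a Hahn-Banach duality argument that reduces it to a statement about vanishing far-field patterns. Since every Herglotz wave function $\bm{E}_{\bm{g},k}$ is an entire solution of $\text{curl\,curl}\,\bm{E}=k^2\bm{E}$ and hence already lies in $M(D)$, and since closure is monotone, it suffices to show that the linear span of the Herglotz waves is dense in $M(D)$ with respect to the $H(\mathrm{curl},D)$ norm; density in $\overline{M(D)}$ then follows by continuity. By Hahn-Banach, I would show that any bounded linear functional $\ell$ on $H(\mathrm{curl},D)$ that annihilates every $\bm{E}_{\bm{g},k}$ must annihilate all of $M(D)$. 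Using the Riesz representation of the dual of $H(\mathrm{curl},D)$, I would write $\ell(\bm{E})=\int_D(\bm{E}\cdot\overline{\bm{\alpha}}+\text{curl}\,\bm{E}\cdot\overline{\bm{\beta}})\,\mathrm{d}\bm{x}$ for some $\bm{\alpha},\bm{\beta}\in(L^2(D))^3$.

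Next I would exploit the integral structure of the Herglotz kernel. Interchanging $\ell$ with the integration over $\mathbb{S}^2$ (justified by continuity of $\ell$ and Fubini), the annihilation condition $\ell(\bm{E}_{\bm{g},k})=0$ for all tangential $\bm{g}$ becomes $\int_{\mathbb{S}^2}\bm{g}(\bm{d})\cdot\overline{\bm{W}(\bm{d})}\,\mathrm{d}s(\bm{d})=0$, where $\bm{W}(\bm{d})$ is obtained by applying $\ell$ to the vector plane wave $\bm{x}\mapsto e^{\mathrm{i}k\bm{x}\cdot\bm{d}}$; explicitly it collects $\int_D e^{-\mathrm{i}k\bm{x}\cdot\bm{d}}\bm{\alpha}\,\mathrm{d}\bm{x}$ together with a term $\int_D e^{-\mathrm{i}k\bm{x}\cdot\bm{d}}(\bm{d}\times\bm{\beta})\,\mathrm{d}\bm{x}$ arising from $\text{curl}(e^{\mathrm{i}k\bm{x}\cdot\bm{d}}\bm{p})=\mathrm{i}k\,\bm{d}\times\bm{p}\,e^{\mathrm{i}k\bm{x}\cdot\bm{d}}$. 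Because $\bm{g}$ ranges over all tangential fields, this forces the tangential part of $\bm{W}(\bm{d})$ to vanish for every $\bm{d}\in\mathbb{S}^2$. The crucial observation is that this tangential part is, up to a fixed constant, exactly the electric far-field pattern of a radiating electromagnetic field $\bm{V}$ constructed as a vector Maxwell volume potential of the densities $\bm{\alpha},\bm{\beta}$ over $D$ via the fundamental solution $\Phi_k$; this $\bm{V}$ satisfies $\text{curl\,curl}\,\bm{V}-k^2\bm{V}=0$ in $\mathbb{R}^3\setminus\overline{D}$ together with the Silver-M\"uller radiation condition.

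With the far-field pattern of $\bm{V}$ vanishing, Rellich's lemma (the complement $\mathbb{R}^3\setminus\overline{D}$ being connected) yields $\bm{V}\equiv\bm{0}$ in $\mathbb{R}^3\setminus\overline{D}$. Since the vector volume potential and its curl have continuous tangential traces across $\partial D$, the interior Cauchy data of $\bm{V}$ also vanish, i.e.\ $\bm{\nu}\times\bm{V}=\bm{0}$ and $\bm{\nu}\times\text{curl}\,\bm{V}=\bm{0}$ on $\partial D$. Finally, for arbitrary $\bm{E}\in M(D)$ I would apply the vector Green's (Stratton-Chu) identity over $D$ to the pair $(\bm{E},\bm{V})$: the boundary integrals vanish because the Cauchy data of $\bm{V}$ vanish, the term $\int_D(\text{curl\,curl}\,\bm{E}-k^2\bm{E})\cdot\overline{\bm{V}}\,\mathrm{d}\bm{x}$ vanishes because $\bm{E}\in M(D)$, and what remains reproduces $\int_D(\bm{E}\cdot\overline{\bm{\alpha}}+\text{curl}\,\bm{E}\cdot\overline{\bm{\beta}})\,\mathrm{d}\bm{x}=\ell(\bm{E})$ by the relation that $\text{curl\,curl}\,\bm{V}-k^2\bm{V}$ equals the source density formed from $\bm{\alpha}$ and $\bm{\beta}$ inside $D$. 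Hence $\ell(\bm{E})=0$, completing the Hahn-Banach step.

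The hard part will be the two-sided regularity bookkeeping at $\partial D$: identifying the correct Maxwell volume potential so that its electric far-field is precisely the tangential part of $\bm{W}$, and verifying that this potential together with its curl has continuous tangential traces, so that the vanishing exterior Cauchy data transfer to the interior. One must also justify the Green's identity for $H(\mathrm{curl},D)$ fields whose curls are only $L^2$, working with the natural trace pairing on $H^{-1/2}(\mathrm{div}_{\partial D})$ and $H^{-1/2}(\mathrm{curl}_{\partial D})$, and observe that only the tangential component of $\bm{W}$ is controlled by the tangential kernels $\bm{g}$, which is consistent precisely because Maxwell far-fields are automatically tangential. These are the delicate points underpinning the cited references, while the remainder of the argument is routine.
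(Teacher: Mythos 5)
The paper gives no proof of this lemma at all --- it is quoted as a known result with a pointer to \cite{CCM11, ColtonKress01} --- so there is no in-paper argument to compare against; your Hahn--Banach duality scheme (annihilating functional $\to$ vanishing tangential far field of a Maxwell volume potential $\to$ Rellich $\to$ vanishing interior Cauchy data $\to$ Green's identity) is precisely the standard proof used in those cited references, and the outline is sound. The only point I would press you on is the one you already flag: since the kernels $\bm{g}$ are tangential, only the tangential part of $\bm{W}(\bm{d})$ is forced to vanish, so you must assemble the potential from the dyadic (Maxwell) kernel rather than the componentwise Helmholtz kernel so that its electric far field is automatically tangential and coincides with that tangential part --- with that done, the rest is the routine bookkeeping you describe.
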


The following theorem states that if $k\in\mathbb{R}_+$ is a Maxwell transmission eigenvalue, then there exists a Herglotz wave function $\bm{E}_{\bm{g}_{\epsilon},k}$ such that the scattered field corresponding to this $\bm{E}_{\bm{g}_{\epsilon},k}$ as the incident field is nearly vanishing. 

\begin{thm}\label{th:EigenfE}
Suppose that $k\in\mathbb{R}_+$ is a Maxwell transmission eigenvalue in $D$. For any sufficiently small $\epsilon \in \mathbb{R}_{+}$, there exists $\bm{g}_{\epsilon}\in L_t^2(\mathbb{S}^2)$ such that
\[
\|F_k \bm{g}_{\epsilon} \|_{(L^2(\mathbb{S}^2))^3}=\mathcal{O}(\epsilon) \ \ \text{and} \ \   \|\bm{E}_{\bm{g}_{\epsilon},k} \|_{H(\mathrm{curl},D)}=1+\mathcal{O}(\epsilon),
\]
where $F_k$ is the far-field operator defined by \eqref{eq:FFO} and $\bm{E}_{\bm{g}_{\epsilon},k}$ is the Herglotz wave function defined by \eqref{eq:ElecHerzPair} with the kernel $\bm{g}_{\epsilon}$.
\end{thm}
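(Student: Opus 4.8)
The plan is to produce $\bm{g}_\epsilon$ by approximating the transmission eigenfunction $\bm{E}_0$ itself with Herglotz waves, and to exploit the fact that an incident field whose Cauchy data coincide with those of $\bm{E}_0$ produces no scattered wave. First I would fix a transmission eigenpair $(\bm{E},\bm{E}_0)$ solving \eqref{eq:elec_TEV_Pro} and normalize it so that $\|\bm{E}_0\|_{H(\mathrm{curl},D)}=1$. Since $\bm{E}_0$ satisfies $\text{curl\,curl}\,\bm{E}_0=k^2\bm{E}_0$ in $D$ and lies in $H(\mathrm{curl},D)$, it belongs to the closure $\overline{M(D)}$ occurring in \cref{le:ElcHergPrDens}; justifying this membership is a Runge-type statement asserting that $H(\mathrm{curl})$ solutions of the constant-coefficient Maxwell system are approximable by solutions smooth up to $\partial D$. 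Granting it, \cref{le:ElcHergPrDens} supplies a kernel $\bm{g}_\epsilon\in L_t^2(\mathbb{S}^2)$ with
\[
\|\bm{E}_{\bm{g}_\epsilon,k}-\bm{E}_0\|_{H(\mathrm{curl},D)}=\mathcal{O}(\epsilon),
\]
and the normalization assertion $\|\bm{E}_{\bm{g}_\epsilon,k}\|_{H(\mathrm{curl},D)}=1+\mathcal{O}(\epsilon)$ is then immediate from the triangle inequality.

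Next I would interpret $F_k\bm{g}_\epsilon$ scattering-theoretically. By the definition \eqref{eq:FFO} together with linearity of the forward map in the incident polarization, $F_k\bm{g}_\epsilon$ equals, up to a fixed nonzero constant, the electric far-field pattern of the scattered field generated when the entire Herglotz wave $\bm{E}_{\bm{g}_\epsilon,k}$ is used as the incident field in \eqref{eq:directPro}. Write $\mathcal{S}$ for the operator sending an interior solution $\bm{w}$ of $\text{curl\,curl}\,\bm{w}=k^2\bm{w}$ in $D$ to the scattered far-field it induces; because the boundary conditions in \eqref{eq:directPro} involve only $\bm{\nu}\times\bm{w}$ and $\bm{\nu}\times\text{curl}\,\bm{w}$ on $\partial D$, the operator $\mathcal{S}$ depends on $\bm{w}$ only through its Cauchy data, and $\bm{E}_0$—though defined solely on $D$—is a legitimate argument. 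The crucial observation is $\mathcal{S}(\bm{E}_0)=\bm{0}$: the transmission conditions in \eqref{eq:elec_TEV_Pro} say precisely that $\bm{E}^s\equiv\bm{0}$ outside $D$, paired with the medium field $\bm{E}$ inside, solves \eqref{eq:directPro} for incident Cauchy data equal to those of $\bm{E}_0$, and by well-posedness this scattered field is the unique one. Since $\bm{E}_{\bm{g}_\epsilon,k}-\bm{E}_0$ is again a solution of $\text{curl\,curl}=k^2$, linearity gives
\[
F_k\bm{g}_\epsilon=c\,\mathcal{S}(\bm{E}_{\bm{g}_\epsilon,k})=c\,\mathcal{S}(\bm{E}_{\bm{g}_\epsilon,k}-\bm{E}_0)
\]
for a fixed constant $c$.

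It then remains to bound $\mathcal{S}$. For any $\bm{w}$ solving $\text{curl\,curl}\,\bm{w}=k^2\bm{w}$ one has $\text{curl}\,\bm{w}\in H(\mathrm{curl},D)$ with $\|\text{curl}\,\bm{w}\|_{H(\mathrm{curl},D)}\le C\|\bm{w}\|_{H(\mathrm{curl},D)}$ (since $\text{curl\,curl}\,\bm{w}=k^2\bm{w}$), so the tangential traces $\bm{\nu}\times\bm{w}$ and $\bm{\nu}\times\text{curl}\,\bm{w}$ are controlled by $\|\bm{w}\|_{H(\mathrm{curl},D)}$. Combining the trace estimates with the continuous dependence of the scattered field, hence of its far-field, on the boundary Cauchy data yields $\|\mathcal{S}(\bm{w})\|_{(L^2(\mathbb{S}^2))^3}\le C\|\bm{w}\|_{H(\mathrm{curl},D)}$. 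Applying this with $\bm{w}=\bm{E}_{\bm{g}_\epsilon,k}-\bm{E}_0$ and invoking the approximation bound gives $\|F_k\bm{g}_\epsilon\|_{(L^2(\mathbb{S}^2))^3}=\mathcal{O}(\epsilon)$, which completes the argument.

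I expect the principal obstacle to lie in the final step, namely the boundedness estimate for $\mathcal{S}$: establishing continuous dependence of the scattered far-field on the incident field measured in the interior $H(\mathrm{curl},D)$ norm (equivalently, in its Cauchy data on $\partial D$). This is exactly the type of Maxwell-specific a priori result the introduction flags as unavailable in the literature, and the merely Lipschitz/piecewise-smooth regularity of $\partial D$ together with the anisotropy of $\bm{N}$ makes the requisite trace and stability estimates delicate. A secondary and more routine difficulty is verifying $\bm{E}_0\in\overline{M(D)}$ through a Runge approximation up to the boundary.
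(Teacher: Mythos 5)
Your proposal follows the same route as the paper's proof: normalize a transmission eigenfunction $\bm{E}_{0,k}$, approximate it in $H(\mathrm{curl},D)$ by a Herglotz wave using the denseness result of Lemma \ref{le:ElcHergPrDens}, and deduce the norm statement from the triangle inequality. The only divergence is at the final step, where the paper simply invokes Theorem 2.5 of \cite{LWZ} to get $\|F_k\bm{g}_\epsilon\|_{(L^2(\mathbb{S}^2))^3}\le C\epsilon$; your argument via the operator $\mathcal{S}$ --- that the scattered field depends only on the Cauchy data of the incident field, vanishes for the eigenfunction's Cauchy data, and depends continuously on those data by the well-posedness of \eqref{eq:directPro} --- is exactly the content of that cited result, so you are reproving the black box rather than taking a genuinely different route.
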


\begin{proof}
Let $\bm{E}_{0,k}$ be a normalized Maxwell transmission eigenfunction in $D$ associated with the Maxwell transmission eigenvalue $k$, which means that $\bm{E}_{0,k}$ with $\|\bm{E}_{0,k}\|_{H(\mathrm{curl},D)}=1$ is a solution of
\[
\text{curl curl}\,\bm{E}_{0,k}-k^2\bm{E}_{0,k}  =0 \quad \text{in } D.
\]
By Lemma \ref{le:ElcHergPrDens}, for any sufficiently small $\epsilon >0$, there exists $\bm{g}_{\epsilon}\in L_t^2(\mathbb{S}^{2})$ such that
\begin{equation*}
\|\bm{E}_{\bm{g}_{\epsilon},k} -\bm{E}_{0,k} \|_{H(\mathrm{curl},D)}< \epsilon,
\end{equation*}
where $\bm{E}_{\bm{g}_{\epsilon},k}$ is the Herglotz wave function with the kernel $\bm{g}_{\epsilon}$. Then, by the triangle inequality,
\[
\|\bm{E}_{\bm{g}_{\epsilon},k} \|_{H(\mathrm{curl},D)}\leq \|\bm{E}_{\bm{g}_{\epsilon},k} -\bm{E}_{0,k} \|_{H(\mathrm{curl},D)}+\| \bm{E}_{0,k} \|_{H(\mathrm{curl},D)}< \epsilon +\| \bm{E}_{0,k} \|_{H(\mathrm{curl},D)},
\]
and
\[
\|\bm{E}_{\bm{g}_{\epsilon},k} \|_{H(\mathrm{curl},D)}\geq \| \bm{E}_{0,k} \|_{H(\mathrm{curl},D)}-\|\bm{E}_{\bm{g}_{\epsilon},k} -\bm{E}_{0,k} \|_{H(\mathrm{curl},D)} > \| \bm{E}_{0,k} \|_{H(\mathrm{curl},D)}- \epsilon.
\]
So, one has that $\|\bm{E}_{\bm{g}_{\epsilon},k} \|_{H(\mathrm{curl},D)} =1+\mathcal{O}(\epsilon)$.

Besides, from the definition of the far-field operator, $F_k \bm{g}_{\epsilon}$ is the far-field pattern produced by the incident wave $\bm{E}_{\bm{g}_{\epsilon},k}$. According to Theorem 2.5 in \cite{LWZ}, one obtains that
\[
\|F_k \bm{g}_{\epsilon} \|_{(L^2(\mathbb{S}^{2}))^3}<C\epsilon,
\]
where $C = C(D,k)$ is a positive constant.

The proof is complete.
\end{proof}

On the basis of Theorem \ref{th:EigenfE} and normalization if necessary, we can say that the following optimization problem:
\[
\min\limits_{\bm{g}\in L_t^2(\mathbb{S}^2)} \|F_k \bm{g} \|_{(L^2(\mathbb{S}^2))^3} \quad \text{    s.t.  } \|\bm{E}_{\bm{g},k} \|_{H(\mathrm{curl},D)}=1
\]
exists at least one solution $\bm{g}_0 \in L_t^2(\mathbb{S}^2)$ when $k\in \mathbb{R}_+$ is a Maxwell transmission eigenvalue in $D$. Unfortunately, since $D$ is unknown, the constraint $\|\bm{E}_{\bm{g},k} \|_{H(\mathrm{curl},D)}=1$ in the problem above is unpractical. However, it is reasonable to address this issue by considering an alternative optimization problem:
\begin{equation}\label{eq:AlterOptnEle}
\min\limits_{\bm{g}\in L_t^2(\mathbb{S}^2)} \|F_k \bm{g} \|_{(L^2(\mathbb{S}^2))^3} \quad \text{    s.t.  } \|\bm{E}_{\bm{g},k} \|_{H(\mathrm{curl},\Omega)}=1,
\end{equation}
where $\Omega$ is an a priori ball containing $D$.

Let $\bm{g}_0$ be a ``satisfactory'' solution to the optimization problem \eqref{eq:AlterOptnEle}. Next, we illustrate that the Herglotz wave $\bm{E}_{\bm{g}_0,k}$ with the Herglotz kernel $\bm{g}_0$ is indeed an approximation to the Maxwell transmission eigenfunction $\bm{E}_{0,k}$ associated with the transmission eigenvalue $k$.
Before further discussion, we give a quantitative Rellich's result for the electromagnetic medium scattering, which is the generalization of the argument in the acoustic scattering case \cite{Blasten16}.

\begin{lem}\label{le:Rell}
Let $R>1, \rho>0$, and $\bm{E}^i, \bm{H}^i \in W^{1,3+\rho}_{loc}(\mathrm{div};\mathbb{R}^3)\cap H_{loc}(\mathrm{curl};\mathbb{R}^3)$ be the incident fields, satisfying
\[
\text{curl}\,\bm{E}^i-\mathrm{i}k\bm{H}^i=0, \ \ \text{curl}\,\bm{H}^i+\mathrm{i}k\bm{E}^i=0,
\]
with $\| \bm{E}^i \|_{ W^{1,3+\rho}(\mathrm{div}; B_{2R})} \leq \mathcal{I}$. Besides, let $\bm{E}, \bm{H}\in W^{1,3+\rho}_{loc}(\mathrm{div};\mathbb{R}^3)\cap H_{loc}(\mathrm{curl};\mathbb{R}^3)$ be the total electromagnetic fields satisfying
\[
\text{curl}\,\bm{E}-\mathrm{i}k\bm{H}=0, \ \ \text{curl}\,\bm{H}+\mathrm{i}k\bm{N}(\bm{x}) \bm{E}=0,
\]
where the matrix index of refraction $\bm{N}(\bm{x})$ is defined in \eqref{eq:ind_ref}. The scattered electric and magnetic fields $\bm{E}^s=\bm{E}-\bm{E}^i , \bm{H}^s=\bm{H}-\bm{H}^i$ satisfy the Silver–M\"uller radiation condition. Let $\bm{E}_{\infty}, \bm{H}_{\infty}$ be their far-field patterns. We further assume that $\| \bm{E}^s \|_{(W^{1,3+\rho}(B_{2R}))^3}\leq \mathcal{T}$.
There is $\epsilon_0>0$ such that if
\[
\|\bm{E}_{\infty} \|_{(L^2(\mathbb{S}^2))^3} \leq \epsilon_0,
\]
then for some positive constant $\beta \leq \frac{\rho}{3+\rho}$ depending on $R, \bm{N}$, there is
\[
\sup_{\partial D}|\bm{E}^s|\leq \mathcal{C}\left(\mathrm{ln\,ln}(\mathcal{T} \|\bm{E}_{\infty} \|_{(L^2(\mathbb{S}^2))^3}^{-1} ) \right)^{-\beta}
\]
for some $\mathcal{C}=C(\bm{N},\omega ,R)(\mathcal{T}+\mathcal{I})$.
\end{lem}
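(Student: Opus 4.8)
The plan is to reduce the vector problem to a scalar one and then run a two-stage propagation-of-smallness argument whose two logarithmic-type moduli compose into the claimed double logarithm. First I would exploit the structure of the Maxwell system in the exterior region $\mathbb{R}^3\setminus\overline{D}$, where $\bm{N}\equiv\bm{I}$: there $\bm{E}^s$ is divergence-free and satisfies $\text{curl}\,\text{curl}\,\bm{E}^s-k^2\bm{E}^s=0$, so using $\text{curl}\,\text{curl}=\nabla\,\text{div}-\Delta$ each Cartesian component $E^s_j$ solves the homogeneous Helmholtz equation $\Delta E^s_j+k^2E^s_j=0$. This lets me transplant the scalar quantitative Rellich machinery of \cite{Blasten16} componentwise, provided I keep careful track of the vector far-field pattern. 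I would record the multipole (vector spherical harmonic) expansion of the radiating field $\bm{E}^s$ for $|\bm{x}|>R$ and note that $\|\bm{E}_{\infty}\|_{(L^2(\mathbb{S}^2))^3}$ controls, up to dimensional constants, the $\ell^2$-norm of the expansion coefficients, while the a priori bound $\|\bm{E}^s\|_{(W^{1,3+\rho}(B_{2R}))^3}\leq\mathcal{T}$ controls the Hankel-weighted norm of the high-order coefficients, the latter furnishing the compactness (source condition) that makes conditional stability possible.

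The first stage is a conditional stability estimate for recovering the near field from the far field. Splitting the expansion at a cutoff $N$, I would bound the low modes by $\|\bm{E}_{\infty}\|_{(L^2(\mathbb{S}^2))^3}$ and control the high modes through the rapid growth of the spherical Hankel functions against the a priori bound $\mathcal{T}$, and then optimize in $N$. Because continuation from infinity is exponentially ill-posed, this optimization yields only a logarithmic-type rate, of the form $\|\bm{E}^s\|_{(L^2(\partial B_{R}))^3}\lesssim \mathcal{T}\,(\ln(\mathcal{T}\|\bm{E}_{\infty}\|_{(L^2(\mathbb{S}^2))^3}^{-1}))^{-\gamma}$ on a sphere $\partial B_R$ enclosing $\overline{D}$, with $\gamma>0$ depending on $R$ and $k$.

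The second stage propagates this interior smallness up to $\partial D$. In the shell between $\partial B_R$ and $\partial D$ the field still solves the homogeneous equation, so I would apply a quantitative unique continuation / three-spheres argument to carry the smallness inward and then a boundary propagation estimate to reach $\partial D$. Since reaching a merely Lipschitz, piecewise-smooth boundary from interior smallness is again only logarithmically stable, composing this logarithmic modulus with the logarithmic rate of the first stage produces the double-logarithmic modulus $(\ln\ln(\mathcal{T}\|\bm{E}_{\infty}\|_{(L^2(\mathbb{S}^2))^3}^{-1}))^{-\beta}$. Finally I would upgrade the $L^2$ smallness near $\partial D$ to the pointwise bound by Gagliardo--Nirenberg interpolation $\|\bm{E}^s\|_{L^\infty}\lesssim\|\bm{E}^s\|_{W^{1,3+\rho}}^{1-\beta}\|\bm{E}^s\|_{L^2}^{\beta}$; since $3+\rho>3$ the embedding $W^{1,3+\rho}\hookrightarrow L^\infty$ is available, and the interpolation exponent is exactly what forces $\beta\leq\rho/(3+\rho)$, while the $W^{1,3+\rho}$ factor contributes the prefactor $\mathcal{C}=C(\bm{N},\omega,R)(\mathcal{T}+\mathcal{I})$, with the incident-field bound $\mathcal{I}$ entering through the a priori control of the associated fields near $\partial D$.

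The hard part will be the second stage: making the propagation of smallness quantitative and explicit all the way up to the non-smooth boundary $\partial D$ for the electromagnetic field, with constants depending only on $R$ and $\bm{N}$, and verifying that the componentwise Helmholtz reduction is fully compatible with the vector radiation and divergence-free constraints, so that the scalar estimates can be reassembled into the stated vector bound without losing uniformity of the constants.
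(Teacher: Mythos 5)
Your overall architecture is the paper's: reduce to the scalar Helmholtz equation componentwise in $\mathbb{R}^3\setminus\overline{D}$ (where $\bm{N}=\bm{I}$ and $\mathrm{div}\,\bm{E}^s=0$), apply a quantitative far-field-to-near-field estimate on an annulus $B_{2R}\setminus B_R$ (the paper simply invokes Corollary 5.3 of \cite{Blasten16}, whose proof is exactly your multipole-cutoff optimization), propagate the smallness through the exterior shell toward $D$ (the paper invokes Proposition 5.7 of \cite{Blasten16}), and finish with a regularity/interpolation step that produces the exponent $\beta\le\rho/(3+\rho)$ and the prefactor $(\mathcal{T}+\mathcal{I})$. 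Your intuition for where the double logarithm comes from is also essentially right.

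There is, however, a concrete gap in your last step. The propagation of smallness does not give you ``$L^2$ smallness near $\partial D$''; it gives $|\bm{E}^s(\bm{x}')|\le C\mathcal{T}\gamma^{c_2^{CR/r}}$ only for $\bm{x}'$ at distance at least $4r$ from $D$, with an exponent that degenerates doubly exponentially as $r\to 0$. A Gagliardo--Nirenberg interpolation on a ball centered at a point of $\partial D$ would require the $L^2$ norm of $\bm{E}^s$ on a set that penetrates the $4r$-collar (and $D$ itself), where no smallness has been established. The paper bridges this last gap of width $O(r)$ differently: it uses the $C^{0,\beta}$ H\"older continuity of $\bm{E}^s$ up to and across $\partial D$ with $\beta=\rho/(3+\rho)$ --- which for the \emph{anisotropic Maxwell transmission problem} is not a consequence of the exterior Helmholtz structure but of the elliptic regularity result of Alberti--Capdeboscq \cite{Alberti}, and this is where the a priori bounds $\mathcal{T}$ and $\mathcal{I}$ enter --- writing $|\bm{E}^s(\bm{x})|\le \|\bm{E}^s\|_{C^{0,\beta}}\,|\bm{x}-\bm{x}'|^{\beta}+|\bm{E}^s(\bm{x}')|\le C(\mathcal{T}+\mathcal{I})r^{\beta}+C\mathcal{T}\gamma^{c_2^{CR/r}}$. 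The double logarithm is then produced by optimizing $r\sim(\ln|\ln\gamma|)^{-1}$ against the degenerating propagation exponent, with $\gamma\sim\mathcal{T}e^{-c\sqrt{\ln(\mathcal{T}/\epsilon_0)}}$ from the first stage. So you should replace the interpolation step by this H\"older-bridging argument (or apply your interpolation on a ball at distance $\sim 4r$ from $D$ and still bridge the remaining $O(r)$ gap by H\"older continuity, which amounts to the same thing); as written, the step that actually touches $\partial D$ is unproved. Your identification of the boundary regularity as the delicate point is well placed --- the paper itself concedes in a remark that it needs more regularity of $\partial D$ than the Lipschitz assumption stated in the setup.
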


\begin{proof}
In what follows, we assume that $D\subset B_R$, where $B_R$ is a central ball of radius $R\in \mathbb{R}_+$.
Clearly, the scattered electric field $\bm{E}^s$ satisfies the vector Helmholtz equation outside $D$. That is to say, the Cartesian components of $\bm{E}^s$ satisfy the scalar Helmholtz equation outside $D$. According to Sobolev embedding and the far-field to near-field estimate for the acoustic scattering (see e.g. Corollary 5.3 in \cite{Blasten16}), there are constants $C,c>0$ depending on $k,R,B$ such that
\begin{equation}\label{eq:est_Es_B}
\|E^s_j \|_{L^{\infty}(B)}\leq \|E^s_j \|_{W^{1,3+\rho}(B)}\leq C\mbox{ max}\left(\|E_{\infty,j}\|_{L^2(\mathbb{S}^2)} , \mathcal{T}\mathrm{e}^{-c\sqrt{\mbox{ln}\left(\mathcal{T}/ \|E_{\infty,j}\|_{L^2(\mathbb{S}^2)}\right) }}\right),
\end{equation}
where the elements with subscripts $j=1,2,3$ stand for the Cartesian components of the corresponding vectors, and $B$ is a domain such that $\overline{B}\subset B_{2R}\setminus \overline{B}_R$.
One of our requirements on $\epsilon_0$ is that the maximum picks the number on the right side of \eqref{eq:est_Es_B}. Therefore we must have $\epsilon_0 \leq \mathcal{T} \mathrm{e}^{-c^2}$.

Besides, we assume that $\| \bm{E}^s \|_{(L^{\infty}( B_{(2-\lambda)R}\setminus B_{(1+\lambda)R}))^3}\leq \gamma \leq 1$. Then according to Proposition 5.7 in \cite{Blasten16},
\begin{equation}\label{eq:est_Es_57}
|\bm{E}^s(\bm{x})|\leq C\mathcal{T}\gamma^{c_2^{(2+\lambda)R/r+2}}
\end{equation}
for $\bm{x}\in B_{2R}\setminus B(D,4r)$, and some positive constants $C, c_2<1/4, 1/4<\lambda<1/2, r<(1-2\lambda)R/2$.
Here, $B(D,4r)=\cup_{\bm{x}\in D}B(\bm{x},4r) $.
Then, when we take $\bm{x}'\in B_{3R/2}\setminus B(D,4r)$, by the H\"older continuity of $\bm{E}^s$ (see e.g. Theorem 4 in \cite{Alberti}) and \eqref{eq:est_Es_57}, there exists $0<\beta \leq \frac{\rho}{3+\rho}$ depending on $R, \bm{N}$ such that
\begin{equation}\label{eq:est_Es_modu}
\begin{split}
|\bm{E}^s(\bm{x})|\leq &\|\bm{E}^s\|_{C^{0,\beta}(\overline{B}_{3R/2} )}|\bm{x}-\bm{x}'|^{\beta}+|\bm{E}^s(\bm{x}')|\\
\leq &C8^{\beta}r^{\beta}\left( \|\bm{E}^s \|_{(L^2(B_{3R/2}))^3}+ \|\bm{E}^s \|_{(W^{1,3+\rho}(B_{3R/2}))^3}+\| \bm{E}^i \|_{W^{1,3+\rho}(\mathrm{div};B_{3R/2})} \right)\\
&+C\mathcal{T}\gamma^{c_2^{(2+\lambda)R/r+2}}\\
\leq &C(\mathcal{T}+\mathcal{I} )8^{\beta}r^{\beta}+C\mathcal{T}\gamma^{c_2^{(2+\lambda)R/r+2}}
\end{split}
\end{equation}
for $d(\bm{x},\partial D)\leq 4r$. The constant $C$ depends on $R, \bm{N},  \omega$.

Next, we require
\[
\gamma= C\mathcal{T} \mathrm{e}^{-c\sqrt{\mbox{ln}(\mathcal{T}/\epsilon_0 ) }}<1/\text{exp exp}\,(C(\lambda,k,R) )
\]
and
\[
r=r\left((\mbox{ln}|\mbox{ln}\gamma |)^{-1} ,\lambda,k,R \right).
\]
If we further choose $\lambda=\lambda(R)$ and let $\epsilon_0$ small enough, the claim follows from \eqref{eq:est_Es_modu}.

\end{proof}

\begin{rem}
Lemma ~\ref{le:Rell} shows that the smallness of the far-field pattern can ensure that the scattered field on the boundary are also small.
\end{rem}

\begin{rem}
We note that Lemma ~\ref{le:Rell} is analyzed in terms of electric fields. If we do the same analysis for the magnetic fields, we can get similar results, that is $| \bm{H}^s | \leq \psi (\epsilon_0) $ on $\partial D$ if $\|\bm{E}_{\infty} \|_{(L^2(\mathbb{S}^2))^3} \leq \epsilon_0$. Here, $\psi$ is a double logarithmic type function and satisfies $\psi (\epsilon_0)\to 0$ as $\epsilon_0\to +0$.
\end{rem}

\begin{rem}
We would like to emphasize that the quantitative Rellich's result for the electromagnetic case is verified here for the first time. Actually, there is a higher requirement for the regularity of the scatterer \cite{Blasten16}. However, we are not overly concerned about this issue, which is not the focus of this paper. Nevertheless, we believe that this regularity assumption can be relaxed. In fact, the subsequent numerical examples demonstrate the effectiveness of our method even if $\partial D$ is only Lipschitz continuous.
\end{rem}

In the following theorem, we demonstrate that the Herglotz wave $\bm{E}_{\bm{g}_0,k}$ is indeed generically an approximation to the Maxwell transmission eigenfunction $\bm{E}_{0,k}$.

\begin{thm}\label{th:IndeedAppro}
Suppose that the assumptions in Lemma ~\ref{le:Rell} hold. Additionally, we suppose that $k\in\mathbb{R}_+$ is a Maxwell transmission eigenvalue in $D$ and $\bm{g}_0$ is a solution to the optimization problem \eqref{eq:AlterOptnEle} satisfying
\begin{equation}\label{eq:ssn1}
\|F_k \bm{g}_0\|_{(L^2(\mathbb{S}^{2}))^3}\leq \epsilon \ll 1.
\end{equation}
Then the Herglotz wave $\bm{E}_{\bm{g}_0,k}$ is an approximation to a transmission eigenfunction $\bm{E}_{0,k}$ associated with the transmission eigenvalue $k$ in $(L^2(D))^3$-norm.
\end{thm}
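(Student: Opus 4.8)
The plan is to interpret $\bm{E}_{\bm{g}_0,k}$ as an incident field scattered by the medium $(D,\bm{N})$, to show that the smallness \eqref{eq:ssn1} of its far-field forces the Cauchy data on $\partial D$ to nearly match those of the associated total field, and finally to convert this approximate matching of the interior transmission system \eqref{eq:elec_TEV_Pro} into $(L^2(D))^3$-closeness to a genuine eigenfunction by a compactness argument. First I would let $\bm{E}_{\bm{g}_0,k}$ play the role of the incident wave in \eqref{eq:incEH}--\eqref{eq:directPro} and write $\bm{E}=\bm{E}_{\bm{g}_0,k}+\bm{E}^s$, $\bm{H}=\bm{H}_{\bm{g}_0,k}+\bm{H}^s$ for the resulting total fields. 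Two facts are then immediate: the total field satisfies $\text{curl\,curl}\,\bm{E}-k^2\bm{N}\bm{E}=0$ in $D$, while $\bm{E}_{\bm{g}_0,k}$, being an entire divergence-free solution, satisfies $\text{curl\,curl}\,\bm{E}_{\bm{g}_0,k}-k^2\bm{E}_{\bm{g}_0,k}=0$ in $D$, so it is a candidate for the free component $\bm{E}_0$ in \eqref{eq:elec_TEV_Pro}. By the superposition built into \eqref{eq:FFO}, the far-field pattern of $\bm{E}^s$ is exactly $F_k\bm{g}_0$, so \eqref{eq:ssn1} reads $\|\bm{E}_\infty\|_{(L^2(\mathbb{S}^2))^3}\le\epsilon$.

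Next I would invoke the quantitative Rellich estimate of Lemma \ref{le:Rell}, together with its magnetic analogue recorded in the following remark, to deduce that $|\bm{E}^s|$ and $|\bm{H}^s|$ are bounded on $\partial D$ by a double-logarithmic quantity $\psi(\epsilon)$ with $\psi(\epsilon)\to 0$ as $\epsilon\to 0$. Since $\bm{\nu}\times(\bm{E}-\bm{E}_{\bm{g}_0,k})=\bm{\nu}\times\bm{E}^s$ and, via $\text{curl}\,\bm{E}^s=\mathrm{i}k\bm{H}^s$, also $\bm{\nu}\times\text{curl}(\bm{E}-\bm{E}_{\bm{g}_0,k})=\mathrm{i}k\,\bm{\nu}\times\bm{H}^s$ on $\partial D$, the two transmission conditions of \eqref{eq:elec_TEV_Pro} hold for the pair $(\bm{E},\bm{E}_{\bm{g}_0,k})$ up to boundary residuals controlled by $\psi(\epsilon)$ in the sup-norm, hence in the relevant trace norms on the Lipschitz surface $\partial D$. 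In other words, $(\bm{E},\bm{E}_{\bm{g}_0,k})$ is an approximate solution of the Maxwell interior transmission problem whose boundary defect tends to zero with $\epsilon$.

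The crux, and the step I expect to be the main obstacle, is to promote this asymptotic boundary matching to genuine $(L^2(D))^3$-proximity to an eigenfunction; the difficulty is that at a transmission eigenvalue the interior transmission problem is not well-posed, its boundary-to-solution map having a nontrivial finite-dimensional kernel --- the eigenspace itself --- so that one cannot simply invert the system and bound the solution by the defect. I would resolve this by compactness and contradiction. Normalizing through the constraint in \eqref{eq:AlterOptnEle}, suppose the assertion fails along some $\epsilon_n\to 0$, so that the corresponding Herglotz waves $\bm{E}_{\bm{g}_{0,n},k}$ remain a fixed positive $(L^2(D))^3$-distance from the eigenspace. As divergence-free solutions of $\text{curl\,curl}\,\bm{E}=k^2\bm{E}$ bounded in $H(\text{curl},\Omega)$, each Cartesian component solves the scalar Helmholtz equation, so interior elliptic regularity yields uniform $H^1$-bounds on $\overline{D}\subset\Omega$ and a subsequence converging in $(L^2(D))^3$ to some $\bm{E}_*$ solving the free equation. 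Simultaneously, the total fields $\bm{E}_n$, uniformly bounded by the hypothesis $\|\bm{E}^s\|_{(W^{1,3+\rho}(B_{2R}))^3}\le\mathcal{T}$ carried over from Lemma \ref{le:Rell}, converge to a limit solving the $\bm{N}$-equation; since the boundary defects vanish in the limit, the two limiting Cauchy data coincide on $\partial D$, whence $\bm{E}_*$ is a genuine transmission eigenfunction, contradicting the distance assumption.

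The delicate technical points are the passage of the vanishing boundary defect to the traces of the weak limits --- which is where the $W^{1,3+\rho}(\text{div})$ regularity and the continuous dependence of the forward map \eqref{eq:incEH}--\eqref{eq:directPro} enter --- together with the exclusion of the degenerate case $\bm{E}_*\equiv\bm{0}$. The word \emph{generically} in the statement is meant precisely to accommodate the latter: one expects a nontrivial limit whenever the normalized interior energy of $\bm{E}_{\bm{g}_{0,n},k}$ on $D$ stays bounded below, which is the typical situation and is what makes $\bm{E}_{\bm{g}_0,k}$ an honest approximation of the eigenfunction $\bm{E}_{0,k}$ rather than of the trivial solution.
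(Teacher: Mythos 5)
Your first half coincides with the paper's: both reinterpret $\bm{E}_{\bm{g}_0,k}$ as an incident field, identify $F_k\bm{g}_0$ with the far-field of the resulting scattered wave, and invoke Lemma~\ref{le:Rell} (and its magnetic analogue) to make the Cauchy data of $\bm{E}^s$ on $\partial D$ small of order $\psi(\epsilon)$. From there the routes diverge completely. The paper proceeds \emph{constructively}: it lifts the small boundary data into $D$ by a function $\bm{\zeta}$ with $\|\bm{\zeta}\|_{H^2(\mathrm{curl},D)}\leq C\psi(\epsilon)$, writes the perturbed interior transmission problem in the fourth-order variational form $(A_k-k^2B)\widetilde{\bm{u}}=\bm{f}$, verifies that $\bm{f}\perp\ker[(A_k-k^2B)^*]$ so that the Fredholm alternative gives solvability in the quotient space $H_0^2(\mathrm{curl},D)/\mathbb{U}$ modulo the eigenspace, and — this is the step your plan has no counterpart for — repeats the whole argument with the second unknown $\bm{v}=\bm{N}\bm{E}-\bm{E}_0$ and the operators $G_k, J$ on $V_0$. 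The two resulting estimates control $(\bm{E}^t-\bm{E}_k)-(\bm{E}_{\bm{g}_0,k}-\bm{E}_{0,k})$ and $\bm{N}(\bm{E}^t-\bm{E}_k)-(\bm{E}_{\bm{g}_0,k}-\bm{E}_{0,k})$ respectively; subtracting them and using the invertibility of $\bm{N}-\bm{I}$ eliminates the unknown total-field difference and isolates $\|\bm{E}_{\bm{g}_0,k}-\bm{E}_{0,k}\|_{(L^2(D))^3}\leq C\psi(\epsilon)$ with an explicit double-logarithmic rate. Your compactness-and-contradiction scheme, by contrast, is purely qualitative and would at best yield convergence along sequences with no rate, which loses the quantitative content that makes the imaging scheme meaningful.

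Beyond this difference in strength, two steps of your argument have genuine gaps. First, you extract an $(L^2(D))^3$-convergent subsequence of the total fields $\bm{E}_n$ from a uniform $H(\mathrm{curl})$-type bound, but $H(\mathrm{curl},D)$ does not embed compactly into $(L^2(D))^3$; you would need to exploit the constraint $\mathrm{div}(\bm{N}\bm{E}_n)=0$ together with control of the normal traces and a Weber-type compact embedding, none of which is set up in your sketch, and the uniform bound on $\bm{E}^s_n$ itself must be derived from the well-posedness of the forward problem rather than ``carried over'' from the hypothesis of Lemma~\ref{le:Rell}, which concerns a single field. Second, the exclusion of the degenerate limit $\bm{E}_*\equiv\bm{0}$ on $D$ is not cosmetic: the normalization in \eqref{eq:AlterOptnEle} is taken on the larger ball $\Omega$, so the restriction to $D$ can in principle vanish in the limit, in which case the limit pair is not an eigenfunction and the stated conclusion is not obtained. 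Deferring this to the word ``generically'' leaves the theorem unproved as stated; the paper's quotient-space construction sidesteps the issue by producing the eigenpair $(\bm{E}_k,\bm{E}_{0,k})$ directly from the solvability analysis rather than as a weak limit that might degenerate.
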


\begin{proof}
Consider the scattering system \eqref{eq:incEH}-\eqref{eq:directPro}. Let $\bm{E}^i=\bm{E}_{\bm{g}_0,k}$, $\bm{E}^s$ and $\bm{E}^t$ be, respectively, the incident, scattered and total electric fields. After expressing the magnetic fields in terms of the electric fields, one has
\begin{equation}\label{eq:EleMedSys}
\left\{
\begin{array}{ll}
\text{curl\,curl}\,\bm{E}^t-k^2 \bm{N}(\bm{x})\bm{E}^t =0 \quad &\text{in} \ D, \medskip \\
\text{curl\,curl}\,\bm{E}_{\bm{g}_0,k}-k^2 \bm{E}_{\bm{g}_0,k}=0\quad &\text{in} \ D, \medskip \\
\bm{\nu}\times (\bm{E}_{\bm{g}_0,k}+\bm{E}^s)=\bm{\nu}\times \bm{E}^t  \quad &\text{on} \ \partial D, \medskip \\
\bm{\nu}\times (\text{curl}\, \bm{E}_{\bm{g}_0,k}+\text{curl}\,\bm{E}^s)=\bm{\nu}\times \text{curl}\,\bm{E}^t \quad &\text{on} \ \partial D.
\end{array}
\right.
\end{equation}
According to our earlier discussion, $F_k\bm{g}_0$ is the far-field pattern of $\bm{E}^s$. By virtue of Lemma ~\ref{le:Rell} and \eqref{eq:ssn1}, there is
\begin{equation}\label{eq:Esmall}
| \bm{\nu} \times \bm{E}^s | \leq \psi (\epsilon) \text{ and }  | \bm{\nu} \times \text{curl}\,\bm{E}^s | \leq \psi (\epsilon)  \quad \text{on }\partial D,
\end{equation}
where $\psi$ is the stability function, which is of double logarithmic type and satisfies $\psi (\epsilon)\to 0$ as $\epsilon\to +0$. For more details about the quantitative Rellich's theorem and the stability function $\psi$, we recommend that readers refer to \cite{Blasten16}.

Recall the Maxwell interior transmission eigenvalue problem \eqref{eq:elec_TEV_Pro}. Setting
\begin{equation}\label{eq:u_v}
\bm{u}=\bm{E}-\bm{E}_0 \quad \text{and} \quad \bm{v}=\bm{N}(\bm{x})\bm{E}-\bm{E}_0,
\end{equation}
one has the following fourth order PDE of $\bm{u}$:
\begin{equation*}\label{eq:u_4th}
(\text{curl curl} -k^2\bm{N} )(\bm{N}-\bm{I})^{-1}(\text{curl curl}\,\bm{u}-k^2\bm{u} )=0 \quad \text{in} \ D.
\end{equation*}
After using a variational framework, the system \eqref{eq:elec_TEV_Pro} becomes
\begin{equation}\label{eq:VarFrame}
\mathcal{A}_k (\bm{u},\bm{u}')-k^2 \mathcal{B}(\bm{u}, \bm{u}')=0, \text{  for any } \bm{u}' \in H_0^2(\mathrm{curl}, D),
\end{equation}
where $\mathcal{A}_k$ and $\mathcal{B}$ are continuous sesquilinear forms on $H^2(\mathrm{curl}, D)\times H^2(\mathrm{curl}, D)$, defined by
\begin{equation*}\label{eq:ses_Ak}
\mathcal{A}_k (\bm{u}, \bm{u}')=\left((\bm{N}-\bm{I})^{-1}(\text{curl curl}\, \bm{u}-k^2\bm{u}), \text{curl curl}\, \bm{u}'-k^2 \bm{u}' \right)_D+k^4(\bm{u},\bm{u}')_D,
\end{equation*}
and
\begin{equation*}\label{eq:ses_B}
\mathcal{B}(\bm{u},\bm{u}')=(\text{curl}\, \bm{u}, \text{curl}\, \bm{u}')_D.
\end{equation*}
With the help of the Riesz representation theorem, we define two bounded linear operators $A_k:H_0^2(\mathrm{curl}, D)\to H_0^2(\mathrm{curl}, D)$ and $B:H_0^2(\mathrm{curl}, D)\to H_0^2(\mathrm{curl}, D)$ by
\begin{equation*}\label{eq:op_Ak}
(A_k \bm{u}, \bm{u}')_{H^2(\mathrm{curl}, D)}:=\mathcal{A}_k (\bm{u},\bm{u}') \quad \text{and}\quad (B\bm{u},\bm{u}')_{H^2(\mathrm{curl}, D)}:=\mathcal{B} (\bm{u},\bm{u}').
\end{equation*}


Next, we consider the scattering system \eqref{eq:EleMedSys}, by \eqref{eq:Esmall} and some analytical regularization techniques (i.e., mollifiers), we could take $\bm{\zeta} \in H^2(\mathrm{curl}, D)$ such that
\begin{equation}\label{eq:zeta_boundary}
\bm{\nu} \times \bm{\zeta}=\bm{\nu} \times \bm{E}^s, \quad \bm{\nu} \times \text{curl}\, \bm{\zeta} = \bm{\nu} \times \text{curl}\, \bm{E}^s \quad \text{on }\partial D,
\end{equation}
and
\begin{equation}\label{eq:zeta_small}
\| \bm{\zeta} \|_{H^2(\mathrm{curl}, D)}\leq C\psi(\epsilon),
\end{equation}
where $C$ is a constant depending on $D$. Setting $\widetilde{\bm{E}}:=\bm{E}^t-\bm{\zeta}\in H^2(\mathrm{curl}, D)$, the system \eqref{eq:EleMedSys} could be rewritten as
\begin{equation}\label{eq:EleMedSys2}
\left\{
\begin{array}{ll}
\text{curl curl}\,\widetilde{\bm{E}}-k^2 \bm{N}(\bm{x})\widetilde{\bm{E}} =-(\text{curl curl} -k^2 \bm{N}(\bm{x}))\bm{\zeta} \quad &\text{in} \ D, \medskip \\
\text{curl curl}\,\bm{E}_{\bm{g}_0,k}-k^2 \bm{E}_{\bm{g}_0,k}=0\quad &\text{in} \ D, \medskip \\
\bm{\nu}\times \widetilde{\bm{E}}= \bm{\nu}\times \bm{E}_{\bm{g}_0,k}  \quad &\text{on} \ \partial D, \medskip \\
\bm{\nu}\times \text{curl}\,\widetilde{\bm{E}} =\bm{\nu}\times \text{curl} \,\bm{E}_{\bm{g}_0,k} \quad &\text{on} \ \partial D.
\end{array}
\right.
\end{equation}
Then, we take $\widetilde{\bm{u}}:=\widetilde{\bm{E}}- \bm{E}_{\bm{g}_0,k}\in H_0^2(\mathrm{curl}, D)$. According to the variational formulae, the PDE system \eqref{eq:EleMedSys2} can be rewritten as
\begin{equation}\label{eq:EleMedSys2VF}
(A_k\widetilde{\bm{u}}, \bm{u}' )-k^2(B \widetilde{\bm{u}}, \bm{u}' )=\mathfrak{f}  (\bm{u}'),\quad \text{for any } \bm{u}' \in H_0^2(\mathrm{curl}, D),
\end{equation}
where $\mathfrak{f}(\bm{u}')$ is an antilinear form on $H^2(\mathrm{curl}, D)$, defined by
\begin{equation}\label{eq:anti_f}
\mathfrak{f}(\bm{u}')=(-(\bm{N}-\bm{I})^{-1}(\text{curl curl} -k^2 \bm{N}(\bm{x}))\bm{\zeta},(\text{curl curl}-k^2) \bm{u}')_D.
\end{equation}

Since we have already known that $A_k$ is an isomorphism, and $B$ is compact (see e.g. Chapter 4 in \cite{CCM11}), the Fredholm alternative could be applied to equations \eqref{eq:VarFrame} and \eqref{eq:EleMedSys2VF}. Now, we need to prove that the operator equation \eqref{eq:EleMedSys2VF} is solvable in the quotient space $H_0^2(\mathrm{curl}, D)/\mathbb{U} $, where $\mathbb{U}$ is the eigenspace of the system \eqref{eq:VarFrame}. We denote by $\bm{f} \in H_0^2(\mathrm{curl}, D)$ the Riesz representation of $\mathfrak{f}$ in $H_0^2(\mathrm{curl}, D)$. Next, we illustrate that
\[
\bm{f} \in \mathrm{ker}[(A_k-k^2B)^*]^{\bot}
\]
where $(A_k-k^2B)^*$ is adjoint to $A_k-k^2B$. If $\bm{v} \in \mathrm{ker}(A_k-k^2B)^*$, then for any $\bm{w} \in H^2(\mathrm{curl}, D)$, there is
\[
\left(\bm{w}, (A_k-k^2B)^* \bm{v} \right)_{H^2(\mathrm{curl}, D)}=0,
\]
which means
\begin{equation}\label{eq:ker_con}
\left((\bm{N}-\bm{I})^{-1}(\text{curl curl}-k^2)\bm{w},(\text{curl curl}-k^2)\bm{v} \right)_D +k^4( \bm{w}, \bm{v})_D-k^2(\text{curl}\, \bm{w}, \text{curl}\, \bm{v})_D=0.
\end{equation}
Then we shall show that $(\bm{v},\bm{f})_{H^2(\mathrm{curl}, D)}=0$. Since $\bm{f}$ is the Riesz representation of $\mathfrak{f}$, applying \eqref{eq:ker_con}, one has
 \begin{equation*}\label{eq:vf}
 \begin{split}
(\bm{v},&\bm{f})_{H^2(\mathrm{curl}, D)}\\
&=\mathfrak{f}(\bm{v})\\
&=\left(-(\bm{N}-\bm{I})^{-1}(\text{curl curl} -k^2 \bm{N}(\bm{x}))\bm{\zeta},(\text{curl curl}-k^2)\bm{v}\right)_D\\
&=-\left((\bm{N}-\bm{I})^{-1}(\text{curl curl} -k^2)\bm{\zeta}-(\bm{N}-\bm{I})^{-1}k^2(\bm{N}-\bm{I})\bm{\zeta},(\text{curl curl}-k^2)\bm{v} \right)_D\\
&=-\left((\bm{N}-\bm{I})^{-1}(\text{curl curl} -k^2)\bm{\zeta},(\text{curl curl}-k^2)\bm{v} \right)_D+(k^2\bm{\zeta}, (\text{curl curl}-k^2)\bm{v} )_D\\
&=-\left((\bm{N}-\bm{I})^{-1}(\text{curl curl} -k^2)\bm{\zeta},(\text{curl curl}-k^2)\bm{v} \right)_D+k^2(\text{curl}\,\bm{\zeta}, \text{curl}\,\bm{v})_D-k^4(\bm{\zeta},\bm{v})_D\\
&=0,
\end{split}
\end{equation*}
where in the second last equality, Green's vector theorem and Gauss's divergence theorem have been used to get
\[
(\bm{\zeta}, \text{curl curl}\,\bm{v} )_D= (\text{curl}\,\bm{\zeta}, \text{curl}\,\bm{v} )_D.
\]
Therefore, the equation \eqref{eq:EleMedSys2VF} is solvable in the quotient space $H_0^2(\mathrm{curl}, D)/\mathbb{U} $. Set
\begin{equation}\label{eq:soln}
\hat{\bm{u}}=(A_k-k^2B)^{-1}\bm{f} \quad \text{in }H_0^2(\mathrm{curl}, D)/\mathbb{U}.
\end{equation}
Noting \eqref{eq:zeta_small} and \eqref{eq:anti_f}, one obtains
\begin{equation}\label{eq:f_small}
\|\bm{f} \|_{(L^2(D))^3}\leq C\psi(\epsilon),
\end{equation}
where $C$ is a constant depending on $\bm{N}$, $k$ and $D$. Combining \eqref{eq:zeta_small}, \eqref{eq:soln} and \eqref{eq:f_small}, one has
\begin{equation}\label{eq:app_small}
\begin{split}
\|\bm{E}_{0,k}- \bm{E}_{\bm{g}_0,k}+\bm{E}^t-\bm{E}_k \|_{ (L^2(D))^3} \leq &\| \widetilde{\bm{u}}-\bm{u} \|_{  (L^2(D))^3}+\|\bm{\zeta} \|_{(L^2(D))^3 }\\
\leq & C\psi(\epsilon) \rightarrow 0\ \ \mbox{as}\ \ \epsilon\rightarrow+0.
\end{split}
\end{equation}

Now, we analyze the interior transmission eigenvalue problem by studying the function $\bm{v}$ instead of $\bm u$ defined in \eqref{eq:u_v}. Taking the difference between the first two equations in \eqref{eq:elec_TEV_Pro}, with the help of \eqref{eq:u_v}, one obtains
\begin{equation}\label{eq:diff_u_v}
\text{curl curl}\,\bm{u}=k^2 \bm{v}\quad \text{in}\ D.
\end{equation}
Next, we define the spaces
\[
S_i:=\{  \bm{f} \in H^{i}(\mathrm{curl},D)| \mbox{div} \bm{f} = 0\},\ i= 0,1,
\]
and the mapping
\[
\mathrm{curl}^{-1}: S_0 \rightarrow S_1/S'
\]
that maps a vector field to its vector potential. Here, $S':= \{  \bm{f} \in H(\mathrm{curl},D)| \mbox{curl} \bm{f}=0,\ \mbox{div} \bm{f}= 0\}$. Besides, we define another mapping that maps a vector field to its vector potential,
\[
\widetilde{\mathrm{curl}}^{-1}:  S_1/S'  \rightarrow H^2(\mathrm{curl},D)/S'',
\]
where $S'':= \{  \bm{f} \in H^2(\mathrm{curl},D)| \mbox{curl}\bm{f}=0\}$. We then represent these two mappings indiscriminately when the space is clear. Accordingly, we define the inner product space
\[
V_0:=\{\bm{f}\in S_0|  \mathrm{curl}^{-1}\mathrm{curl}^{-1}\bm{f} \in H_0^2(\mathrm{curl},D) \}
\]
equipped with the $(L^2(D))^3 $ scalar product. Therefore, the existence and uniqueness of a strong solution to \eqref{eq:elec_TEV_Pro} is equivalent to the existence and uniqueness of $\bm{v}\in V_0$ and $\bm{u}\in H_0^2(\mathrm{curl},D)$ satisfying \eqref{eq:diff_u_v} and
\begin{equation}\label{eq:pde_v}
(\text{curl curl}-k^2\bm{N} )(\bm{N}-\bm{I})^{-1}(\bm{v}-k^2\mathrm{curl}^{-1}\mathrm{curl}^{-1}\bm{v} )=0 \quad \text{in }D.
\end{equation}
After using the variational formulation, one easily see that $\bm{v} \in  V_0$ satisfies \eqref{eq:pde_v} if and only if
\begin{equation}\label{eq:var_v}
\left((\bm{N}-\bm{I})^{-1}(\bm{v}-k^2 \mathrm{curl}^{-1}\mathrm{curl}^{-1}\bm{v}),\bm{v}'-k^2\bm{N}\mathrm{curl}^{-1}\mathrm{curl}^{-1} \bm{v}' \right)_D=0
\end{equation}
for all $\bm{v}' \in V_0$.

Then, we set
\begin{equation}\label{eq:ses_G}
\begin{split}
\mathcal{G}_k(\bm{v},\bm{v}')=&\left((\bm{N}-\bm{I})^{-1}(\bm{v}-k^2\mathrm{curl}^{-1}\mathrm{curl}^{-1} \bm{v}), \bm{v}'-k^2\mathrm{curl}^{-1}\mathrm{curl}^{-1}\bm{v}' \right)_D\\
&+ \left(k^2\mathrm{curl}^{-1}\mathrm{curl}^{-1} \bm{v}, k^2\mathrm{curl}^{-1}\mathrm{curl}^{-1} \bm{v}' \right)_D
\end{split}
\end{equation}
and
\begin{equation}\label{eq:ses_J}
\mathcal{J}(\bm{v},\bm{v}')=\left(\mathrm{curl}^{-1}\bm{v}, \mathrm{curl}^{-1}\bm{v}' \right)_D,
\end{equation}
which define two sesquilinear forms on $ V\times V$. Using the identities
\[
\bm{N}(\bm{N}-\bm{I})^{-1}=\bm{I}+(\bm{N}-\bm{I})^{-1}
\]
and
\[
\left(\bm{v}, \mathrm{curl}^{-1}\mathrm{curl}^{-1}\bm{v}' \right)_D=\left(\mathrm{curl}^{-1}\bm{v}, \mathrm{curl}^{-1}\bm{v}' \right)_D
\]
for all $(\bm{v},\bm{v}')\in V\times V_0$, the equation \eqref{eq:var_v} can be rewritten as
\begin{equation}\label{eq:pro_sesqui}
\mathcal{G}_k(\bm{v},\bm{v}')-k^2\mathcal{J}(\bm{v},\bm{v}')=0
\end{equation}
for all $\bm{v}' \in V_0$. With the help of Poincar\'e-type inequality \cite{Monk}, we know that the sesquilinear forms $\mathcal{G}_k$ and $\mathcal{J}$ are bounded. Based on the Riesz representation theorem, we could define the continuous operators $G_k : V_0\to V_0$ such that
\[
(G_k \bm{v}_0,\bm{v}')_D =\mathcal{G}_k(\bm{v}_0,\bm{v}')
\]
for all $\bm{v}' \in V_0$, and $J: V_0\to V_0$ such that
\[
(J\bm{v}_0,\bm{v}')_D =\mathcal{J}(\bm{v}_0,\bm{v}')
\]
for all $\bm{v}' \in V_0$.

Next, we show that $\mathcal{G}_k$ is a coercive sesquilinear form on $ V_0\times V_0$. According to \eqref{eq:mat_posi_def}, there is
\begin{equation}\label{eq:G_1}
\mathcal{G}_k(\bm{v}_0,\bm{v}_0)\geq \alpha \|\bm{v}_0-k^2\mathrm{curl}^{-1}\mathrm{curl}^{-1} \bm{v}_0\|^2_{(L^2(D))^3 }+k^4\|\mathrm{curl}^{-1}\mathrm{curl}^{-1} \bm{v}_0 \|^2_{(L^2(D))^3 }.
\end{equation}
Setting $a= \| \bm{v}_0 \|_{(L^2(D))^3 }$ and $b=k^2\| \mathrm{curl}^{-1}\mathrm{curl}^{-1} \bm{v}_0\|_{(L^2(D))^3 }$, we have
\begin{equation}\label{eq:G_2}
\|\bm{v}_0-k^2\mathrm{curl}^{-1}\mathrm{curl}^{-1} \bm{v}_0\|^2_{(L^2(D))^3 }\geq a^2+b^2-2ab.
\end{equation}
Combining \eqref{eq:G_1} and \eqref{eq:G_2}, one concludes that
\begin{equation*}\label{eq:G_3}
\mathcal{G}_k(\bm{v}_0,\bm{v}_0)\geq \alpha a^2-2\alpha ab+(1+\alpha)b^2\geq \frac{\alpha}{1+2\alpha}(a^2+b^2)\geq \frac{\alpha}{1+2\alpha} \|\bm{v}_0 \|^2_{(L^2(D))^3 }.
\end{equation*}
Since $\mathcal{G}_k$ is a coercive sesquilinear form on $ V_0\times V_0$, by the Lax–Milgram theorem, we deduce that $G_k : V_0\to V_0$ is a bijective operator.

Regarding $J$, we would like to illustrate that it is compact. Let $\{\bm{v}_n \}$ be a bounded sequence in $V_0$, by the Bolzano-Weierstrass theorem, there exists a subsequence, denoted by $\{\bm{v}_{n_i} \}$, that converges weakly to some $\bm{v}_0$ in $V_0$. Provided $\partial D$ is sufficiently smooth, the space
\[
\{\bm{u}\in H_0(\mathrm{curl},D):\text{div}\,\bm{u}=0  \ \mathrm{in} \ D \}
\]
is continuously embedded into $H^1(D)$. So, the sequence $\{ \mathrm{curl}^{-1} \bm{v}_{n_i}\} $ is bounded in $H^1(D)$. By the Rellich compact embedding theorem, one has that $\{ \mathrm{curl}^{-1} \bm{v}_{n_i}\} $ converges strongly to $\mathrm{curl}^{-1} \bm{v}_0$ in $(L^2(D))^3$. According to the definition of $J$ and the Cauchy-Schwarz inequality, there is
\[
\|J(\bm{v}_{n_i}-\bm{v}_0 )  \|^2_{(L^2(D))^3 }\leq \|\mathrm{curl}^{-1}(\bm{v}_{n_i}-\bm{v}_0) \|_{(L^2(D))^3 } \| \mathrm{curl}^{-1}( J(\bm{v}_{n_i}-\bm{v}_0) ) \|_{(L^2(D))^3 }.
\]
Besides, using Poincaré-type inequality, we deduce that
\[
\|J(\bm{v}_{n_i}-\bm{v}_0 )  \|_{(L^2(D))^3 }\geq \| \mathrm{curl}^{-1}( J(\bm{v}_{n_i}-\bm{v}_0) ) \|_{(L^2(D))^3 }.
\]
Combining the above two formulas, we get
\[
\|J(\bm{v}_{n_i}-\bm{v}_0 )  \|_{(L^2(D))^3 }\leq \|\mathrm{curl}^{-1}(\bm{v}_{n_i}-\bm{v}_0) \|_{(L^2(D))^3 }.
\]
Therefore, the sequence $\{J\bm{v}_{n_i} \}$ converges strongly to $J\bm{v}_0$ in $V_0$.


We now consider the scattering system \eqref{eq:EleMedSys} again. We take $\bm{\zeta}\in H^2(\mathrm{curl},D)$ satisfying \eqref{eq:zeta_boundary} and \eqref{eq:zeta_small}, and define $\bm{\iota}:=\text{curl curl}\,\bm{\zeta}\in V$. Setting $\bar{\bm{E}}:=\bm{E}^t-\bm{\zeta} \in H^2(\mathrm{curl},D)$, the system \eqref{eq:EleMedSys} becomes
\begin{equation}\label{eq:EleMedSys3}
\left\{
\begin{array}{ll}
\text{curl curl}\,\bar{\bm{E}}-k^2 \bm{N}(\bm{x})\bar{\bm{E}} =-(\text{curl curl} -k^2 \bm{N}(\bm{x}))\bm{\zeta} \quad &\text{in} \ D, \medskip \\
\text{curl curl}\,\bm{E}_{\bm{g}_0,k}-k^2 \bm{E}_{\bm{g}_0,k}=0\quad &\text{in} \ D, \medskip \\
\bm{\nu}\times \bar{\bm{E}}=\bm{\nu}\times \bm{E}_{\bm{g}_0,k}  \quad &\text{on} \ \partial D, \medskip \\
\bm{\nu}\times \text{curl}\,\bar{\bm{E}} =\bm{\nu}\times \text{curl}\,\bm{E}_{\bm{g}_0,k} \quad &\text{on} \ \partial D.
\end{array}
\right.
\end{equation}
Let $\bar{\bm{u}} :=\bar{\bm{E}}- \bm{E}_{\bm{g}_0,k} \in H_0^2(\mathrm{curl},D)$, $\bar{\bm{v}}:=\bm{N}(\bm{x})\bar{\bm{E}}-\bm{E}_{\bm{g}_0,k}-k^{-2}\text{curl curl}\,\bm{\zeta} +\bm{N}(\bm{x})\bm{\zeta} \in V_0$. Then $\text{curl curl}\,\bar{\bm{u}}=k^2\bar{\bm{v}}$, and the equations in \eqref{eq:EleMedSys3} can be rewritten as
\begin{equation}\label{eq:inhomo_v_eta}
\begin{split}
(\text{curl curl}-k^2 \bm{N})(\bm{N}-\bm{I})^{-1}(\bar{\bm{v}}-k^2\mathrm{curl}^{-1}\mathrm{curl}^{-1}\bar{\bm{v}}+&k^{-2}\bm{\iota}-\bm{N}\mathrm{curl}^{-1}\mathrm{curl}^{-1}\bm{\iota} )\\
&+\bm{\iota}-k^2\bm{N}\mathrm{curl}^{-1}\mathrm{curl}^{-1}\bm{\iota} =0\quad \text{in }D.
\end{split}
\end{equation}
Using a variational framework, we can see that $\bar{\bm{v}}\in V_0$ satisfying \eqref{eq:inhomo_v_eta} is equivalent to
\begin{equation}\label{eq:var_v_eta}
\mathcal{G}_k(\bar{\bm{v}},\bm{v}')-k^2\mathcal{J}(\bar{\bm{v}},\bm{v}')=\mathfrak{q}(\bm{v}')\quad \text{for all }\bm{v}'\in V_0,
\end{equation}
where $ \mathcal{G}_k$ and $\mathcal{J}$ are sesquilinear forms defined in \eqref{eq:ses_G} and \eqref{eq:ses_J}. And $\mathfrak{q}(\bm{v}')$ is an antilinear form defined by
\begin{equation}\label{eq:anti_q}
\begin{split}
\mathfrak{q}(\bm{v}'):=&\left(\mathrm{curl}^{-1}\mathrm{curl}^{-1}\bm{\iota}+(\bm{N}-\bm{I})^{-1}\mathrm{curl}^{-1}\mathrm{curl}^{-1}\bm{\iota}-k^{-2}(\bm{N}-\bm{I})^{-1}\bm{\iota}, \bm{v}'\right)_D\\
&+\left((\bm{N}-\bm{I})^{-1}\bm{\iota}-k^2\mathrm{curl}^{-1}\mathrm{curl}^{-1}\bm{\iota}-k^2(\bm{N}-\bm{I})^{-1}\mathrm{curl}^{-1}\mathrm{curl}^{-1}\bm{\iota}, \mathrm{curl}^{-1}\mathrm{curl}^{-1}\bm{v}' \right)_D.
\end{split}
\end{equation}

As $G_k$ is an isomorphism, and $J$ is compact, the Fredholm alternative can be applied to the systems \eqref{eq:pro_sesqui} and \eqref{eq:var_v_eta}. Now, we show that \eqref{eq:var_v_eta} is solvable in the quotient space $V_0/\mathbb{V} $, where $\mathbb{V} $ is the eigenspace of \eqref{eq:pro_sesqui}. We denote by $\bm{\tilde q}$ the Riesz representation of $\mathfrak{q} $ in $V_0$. Next, we verify that
\[
\bm{\tilde q}\in \text{ker}[(G_k-k^2J)^*]^{\bot}.
\]
If the function $\bm{\varsigma}\in V_0$ belongs to the kernel of $(G_k-k^2J)^*$, then for any $\bm{\tau}\in V$, one has
\[
\left(\bm{\tau}, (G_k-k^2J)^*\bm{\varsigma} \right)_D=0,
\]
that is
\begin{equation}\label{eq:ker_tau_sigma}
\begin{split}
&\left((\bm{N}-\bm{I})^{-1}(\bm{\tau}-k^2\mathrm{curl}^{-1}\mathrm{curl}^{-1}\bm{\tau}),\bm{\varsigma}-k^2\mathrm{curl}^{-1}\mathrm{curl}^{-1}\bm{\varsigma} \right)_D\\
&\quad+(k^2\mathrm{curl}^{-1}\mathrm{curl}^{-1}\bm{\tau}, k^2\mathrm{curl}^{-1}\mathrm{curl}^{-1}\bm{\varsigma} )_D-k^2(\mathrm{curl}^{-1}\bm{\tau}, \mathrm{curl}^{-1} \bm{\varsigma} )_D=0.
\end{split}
\end{equation}
As $\bm{\tilde q}$ is the Riesz representation of $\mathfrak{q} $, with the help of \eqref{eq:ker_tau_sigma}, there is
\begin{equation*}\label{eq:ortho_sig_q}
\begin{split}
(\bm{\varsigma}, \bm{\tilde q}&)_D\\
=&\mathfrak{q} (\bm{\varsigma} )\\
=&\left(\mathrm{curl}^{-1}\mathrm{curl}^{-1}\bm{\iota}+(\bm{N}-\bm{I})^{-1}\mathrm{curl}^{-1}\mathrm{curl}^{-1}\bm{\iota}-k^{-2}(\bm{N}-\bm{I})^{-1}\bm{\iota}, \bm{\varsigma} \right)_D\\
&+\left((\bm{N}-\bm{I})^{-1}\bm{\iota}-k^2\mathrm{curl}^{-1}\mathrm{curl}^{-1}\bm{\iota}-k^2(\bm{N}-\bm{I})^{-1}\mathrm{curl}^{-1}\mathrm{curl}^{-1}\bm{\iota}, \mathrm{curl}^{-1}\mathrm{curl}^{-1}\bm{\varsigma} \right)_D\\
=&0.
\end{split}
\end{equation*}
Up to now, we have verified that \eqref{eq:var_v_eta} is solvable in the quotient space $V_0/\mathbb{V} $. By \eqref{eq:zeta_small} and \eqref{eq:anti_q}, we observes that
\begin{equation}\label{eq:q_small}
\|  \bm{\tilde q}\|_{(L^2(D))^3}\leq C\psi(\epsilon),
\end{equation}
where $C$ is a constant depending on $\bm{N}, k$ and $D$. Setting
\begin{equation*}\label{eq:sol_v}
\hat{\bm{v}}=(G_k-k^2J)^{-1}\bm{\tilde q}  \quad \text{in }V_0/\mathbb{V},
\end{equation*}
and using \eqref{eq:zeta_small}, \eqref{eq:q_small}, one has
\begin{equation}\label{eq:diff_hat_v}
\begin{split}
\|\bm{N}\bm{E}^t- \bm{N} \bm{E}_k-\bm{E}_{\bm{g}_0,k}+\bm{E}_{0,k}\|_{(L^2(D))^3 }\leq &\|\hat{\bm{v}} \|_{(L^2(D))^3}+k^{-2}\| \text{curl curl}\,\bm{\zeta}\|_{(L^2(D))^3}\\
\leq &C\psi(\epsilon)\to 0 \text{  as  }\epsilon\to +0.
\end{split}
\end{equation}
Finally, by combining \eqref{eq:app_small} and \eqref{eq:diff_hat_v}, one obtains
\begin{equation*}
\|\bm{E}_{\bm{g}_0,k}-\bm{E}_{0,k} \|_{(L^2(D))^3}\leq C\psi(\epsilon)\to 0 \text{  as  }\epsilon\to +0.
\end{equation*}

The proof is complete.
\end{proof}

\begin{rem}
It is remarked that Lemma ~\ref{le:Rell} is used in the proof of Theorem ~\ref{th:IndeedAppro}. So, the higher requirement for the regularity of the scatterer is needed technically. However, we believe that this regularity assumption can be relaxed, which is showed in the following numerical examples. The approximation result Theorem ~\ref{th:IndeedAppro} is an extension of the similar result in the acoustic scattering problem, but this study is more technical and challenging. Meanwhile, we believe that this result is not only of independent interest to the scattering theory of electromagnetic waves, but also provides a theoretical basis for practical applications, such as super-resolution imaging and invisibility cloaking.
\end{rem}

\begin{rem}
Theorem \ref{th:EigenfE} and \ref{th:IndeedAppro} indicate that if the Herglotz kernel $\bm g_0$ is a solution to the optimization problem \eqref{eq:AlterOptnEle}, then the Herglotz wave function $\bm{E}_{\bm{g}_0,k}$ \eqref{eq:ElecHerzPair} is approximated to the transmission eigenfunction $\bm{E}_{0,k}$ associated with the correspondingly eigenvalue $k$.
Thus, in order to recover the transmission eigenfunction, it is critical for solving the constrained optimization problem \eqref{eq:AlterOptnEle} and obtain a satisfactory Herglotz kernel $\bm g_0$. In what follows, we will use the Fourier-total-least-square (FTLS) method and Gradient-total-least-square (GTLS) method \cite{LLWW} to compute $\bm g_0$, respectively.
\end{rem}

\section{Imaging of the scatterer}

According to sections \ref{sec:2} and \ref{sec:3}, we can determine the transmission eigenvalues and the corresponding  transmission eigenfunctions from the electric far-field data in  \eqref{eq:nonlin_ope_eqn}. In this section, we show that  the ``invisible patterns'', the transmission eigenfunctions, can be used to qualitatively image the shape of the electromagnetic medium scatterer $D$, independent of $\bm{N}$.

In our recent studies, we find that the Maxwell transmission eigenfunctions $\bm{E}_{0,k}$ possess the following interesting global and local geometric properties. From a global perspective, there exists a sequence of surface-localized transmission eigenfunctions associated with sufficient large transmission eigenvalues, that is,  the $L^2$-energy of the eigenfunction concentrates on the boundary $\partial D$ \cite{DLWW}.
We would like to point out that these surface-localized eigenstates occur very frequently through theoretical and numerical observations.  From a local perspective, the transmission eigenfunctions are nearly vanishing around the high-curvature points on $\partial D$, especially around the corner points of $\partial D$ \cite{BlastenLX}. Meanwhile, the rigorous mathematical justifications in section \ref{sec:3} show that the Herglotz wave $\bm{E}_{\bm{g}_0,k}$ is an approximation of  the transmission eigenfunction $\bm{E}_{0,k}$.
 So the Herglotz wave has the same geometric  attributes as the transmission eigenfunction.
 Hence, based on the global and local properties of the Herglotz wave,  we introduce the following indicator function for identifying the shape of the scatterer $D$:
 \begin{equation*}\label{eq:single-indicator}
I_k^{\text{Res}}(\bm{z}):= -\text{ln}\| \bm{E}_{\bm{g}_0,k}(\bm{z}) \|,
\end{equation*}
where $\bm z\in \widetilde{D}$ is the sampling point with $D\subset\widetilde{D}\subset \mathbb{R}^3$. It is worth noting that the wave field $\bm{E}_{\bm{g}_0,k}$ is nearly vanishing in the interior of $D$  and it is vanishing around the high-curvature points. Thus, the indicator function $I_k^{\text{Res}}(\bm{z})$ possesses a relative large value if $\bm{z}$ belongs to the interior of $D$ or locates at the corner/edge/highly-curved place on $\partial D$ \cite{DLWY}, whereas it possesses a relatively small value if $\bm{z}$ locates in the other places around $\partial D$ and outside of $D$.

 Noting that the multiple transmission eigenfunctions possess the same geometric properties, we superimpose the imaging effects by introducing the following multi-frequency imaging functional:
\begin{equation}\label{eq:multi-indicator}
I_{\mathbb{K}_L}^{\text{Res}} (\bm{z}):= -\text{ln} \sum\limits_{k\in{\mathbb{K}_L}}| \bm{E}_{\bm{g}_0,k}(\bm{z}) |,
\end{equation}
where $\mathbb{K}_L=\{k_1,k_2,\cdots,k_L \}$ denotes the set of $L$ distinct transmission eigenvalues. Based on the imaging functional \eqref{eq:multi-indicator}, we then propose the following imaging scheme, which is referred to as imaging by interior resonant modes.

\begin{table}[htbp]
\begin{tabular}{p{1.5cm} p{12cm}}
\toprule
\multicolumn{2}{l}{\textbf{Algorithm \uppercase\expandafter{\romannumeral2}:}\ Imaging by interior resonant modes}\\
\midrule
Step 1 & For each resonant wavenumber $k$ found in Algorithm I, solve the optimization problem \eqref{eq:AlterOptnEle} by the FTLS method or the GTLS method \cite{LLWW} to obtain the vector Herglotz kernel $\bm{g}_0$.\\
Step 2 & Calculate the electric Herglotz wave function $\bm{E}_{\bm{g}_0,k}$ with the Herglotz kernel $\bm{g}_0$ by the definition \eqref{eq:ElecHerzPair}.\\
Step 3 & Plot the indicator function \eqref{eq:multi-indicator} in a proper domain containing the electromagnetic medium scatterer $D$ and identify the interior and corners (two dimension) or edges (three dimension) as bright points, and other boundary places as dark points in the graph to obtain the shape of the scatterer $D$.\\
\bottomrule
\end{tabular}
\end{table}

\subsection{Transmission eigenvalues reconstruction}

In this part, we present a numerical experiment to verify the validity of Algorithm \uppercase\expandafter{\romannumeral1} for Maxwell transmission eigenvalues reconstruction based on the LSM. We take the regularization parameter $\delta=10^{-5}$. To begin with,
we consider a unit ball, see Figure \ref{fig:ball}(a).  In order to reduce the calculation expense, we use a normal mesh with mesh size $h\approx 0.15$.
To avoid the inverse crime, we use the finite element method (FEM) to compute a pair of electric far-field pattern
$\bm{E}_{\infty}(\hat{\bm{x}}_i,\bm{d}_j, {\bm p}_j^{\theta};k_{s})$ and $\bm{E}_{\infty}(\hat{\bm{x}}_i,\bm{d}_j, {\bm p}_j^{\phi};k_{s})$,
where $i=1,2,\cdots,M$, $j=1,2,\cdots,N$ and $s=1,2,\cdots,S$.
Here, the observation and incident directions are pseudo uniformly distributed on the unit spherical surface.  The artificial far-field data are computed at $100$ observation directions ($M=100$), $100$ incident directions ($N=100$) and $81$ equally distributed wave numbers ($S=81$)  in $[1.1, 1.5]$. Here the
refractive index is given by
\begin{equation*}
  \bm N(\bm x)=\left(
          \begin{array}{ccc}
            16 & 0 & 0 \\
            0 & 16 & 0 \\
            0 & 0 & 16 \\
          \end{array}
        \right).
\end{equation*}

In Figure \ref{fig:ball}(b), the solid blue line shows the value of $\sum_{\ell=1}^3\|\bm{g}_{\delta}(\cdot, \bm{z}_0,\bm{q}_{\ell};k_s) \|_{(L^2(\mathbb{S}^2))^3}^2$  against $k_s\in [1.1, 1.5]$ for a fixed test point $\bm{z}_0=(0.1, 0.3, 0.4)$ with $1\%$ noise.
As expected, it can be seen that the solid blue line has clear spikes, which indicate the locations of the Maxwell transmission eigenvalues.  To show the accuracy of the reconstruction, we list the exact eigenvalues,  the reconstructed eigenvalues by the mixed FEM \cite{Monk2012} and the LSM, {\color{blue}in} Table \ref{tab1:eigenvalues}. From the results of the comparison, we can see that Algorithm \uppercase\expandafter{\romannumeral1} is valid for identifying the Maxwell transmission eigenvalues.

\begin{figure}
\subfigure[a unit ball centered at $(0,0,0)$]
{\includegraphics[width=0.39\textwidth]{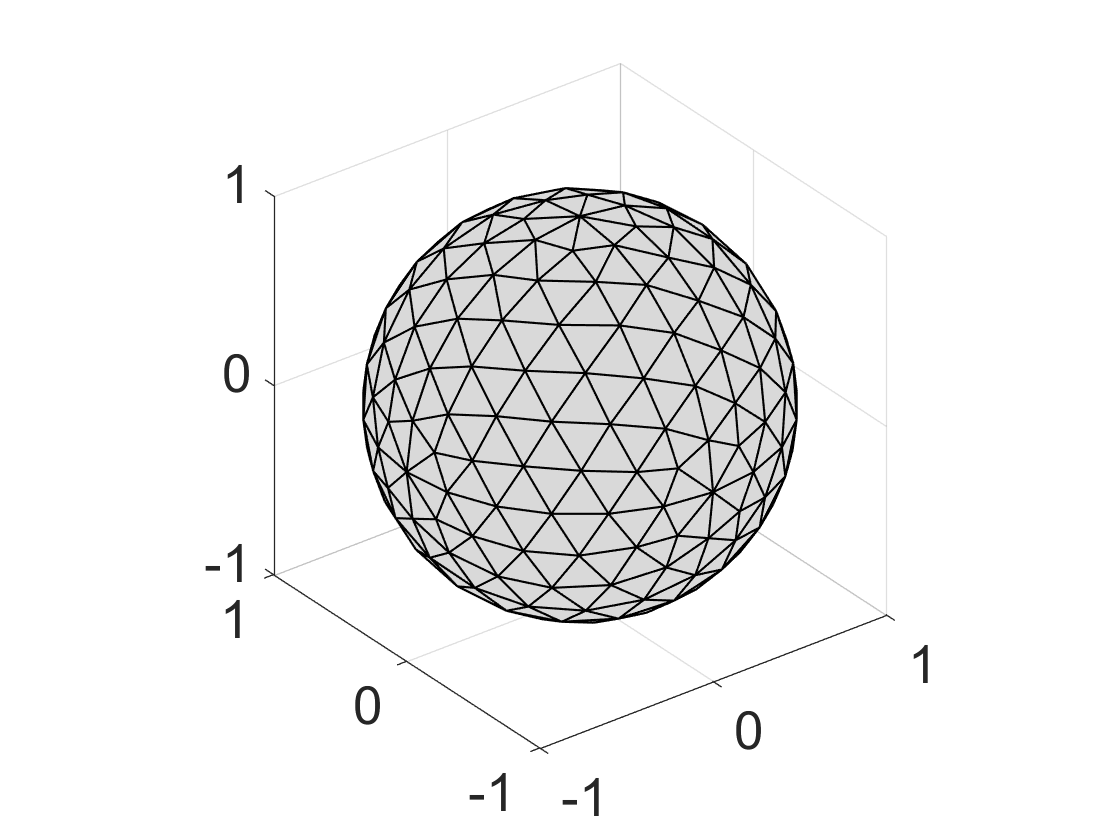}}%
\subfigure[the indicator function vs. $k$]
{\includegraphics[width=0.6 \textwidth]{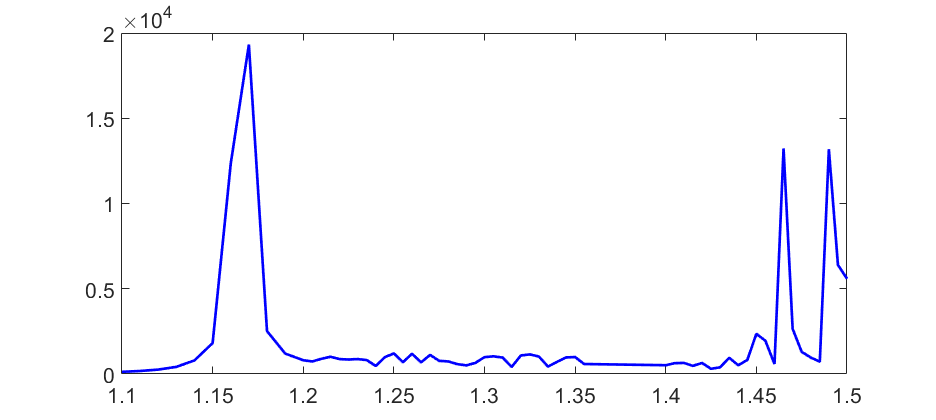}}\\
\caption{\label{fig:ball} (a) $D$: a unit ball; (b) plot of the indicator function $\sum_{\ell=1}^3\|\bm{g}_{\delta}(\cdot, \bm{z}_0,\bm{q}_{\ell};k_s) \|_{(L^2(\mathbb{S}^2))^3}^2$  for a unit ball centered at $(0, 0, 0)$, where $k_s\in [1.1,1.5]$.}
\end{figure}

\begin{table}[h]
\center
 \caption{The first three real Maxwell transmission eigenvalues of the spherical domain. }\label{tab1:eigenvalues}
 \begin{tabular}{lccccccccc}
  \toprule
  Index of eigenvalue  & $1$     & $2$     &$3$         \\
  \midrule
 Exact:                &$1.165$       &$1.461 $       &$1.475 $    \\
 FEM:                 &$1.166$        &$1.462$         &$1.476$     \\
 LSM:                 &$1.170$       &$1.465$        &$1.490$       \\
  \bottomrule
 \end{tabular}
\end{table}

\subsection{Scatterer reconstruction}

In this part, we give several numerical examples to show that the proposed imaging scheme can effectively reconstruct the scatterer $D$.

\begin{figure}
\hfill\subfigure[Gourd]{\includegraphics[width=0.35\textwidth]
                   {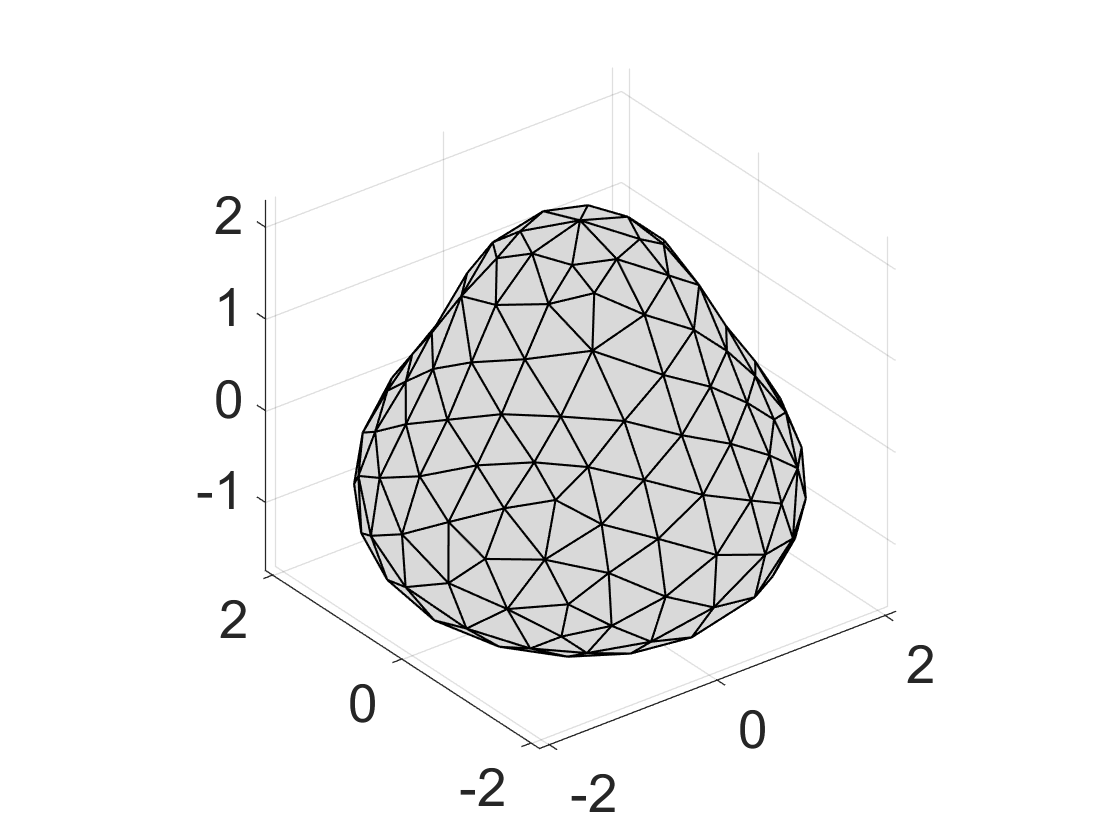}}\hfill
 \hfill\subfigure[Kite]{\includegraphics[width=0.32\textwidth]
                   {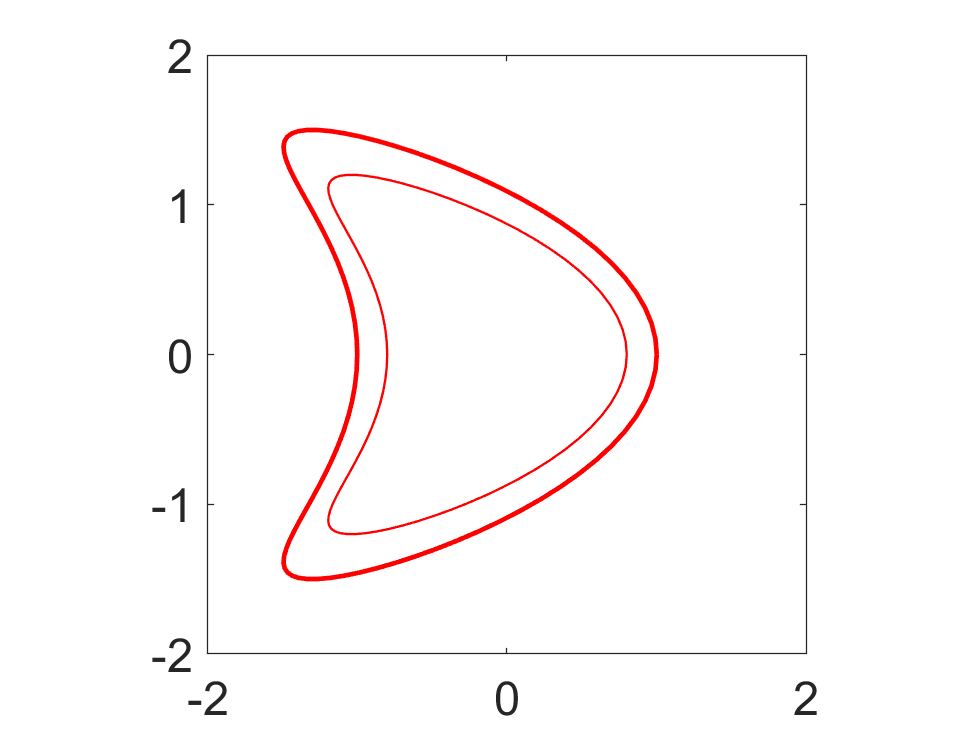}}\hfill
\hfill\subfigure[Square]{\includegraphics[width=0.32\textwidth]
                   {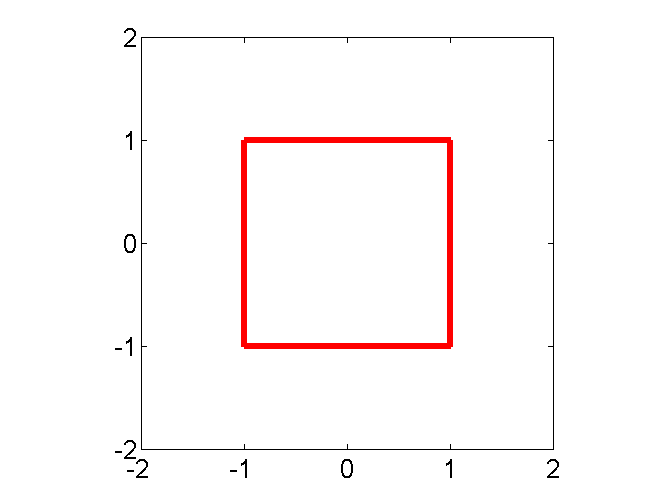}}\hfill
\caption{\label{fig:Geometry} Shapes considered in the numerical examples. (a) gourd-shaped domain in 3D;  (b) layered kite-shaped domain in 2D;  (c) square domain in 2D.}
\end{figure}

\subsubsection{Gourd-shaped domain}

In the first example, we utilize the global properties of transmission eigenfunctions to identify the anisotropic inhomogeneity in the $3$D setting. Let the geometry be a gourd-shaped domain, see Figure \ref{fig:Geometry}(a).
The refractive index is given by
\begin{equation*}
  \bm N(\bm x)=\left(
          \begin{array}{ccc}
            16 & 1 & 0 \\
            1 & 16 & 0 \\
            0 & 0 & 14 \\
          \end{array}
        \right).
\end{equation*}
 The artificial far-field data are computed at $100$ observation directions and $100$ incident directions.
 To begin with, we use the mixed FEM to calculate the transmission eigenfunctions associated with different eigenvalues.
Figure \ref{fig:triangle-eigenfunction} shows the surface plots and multiple slices plots of the eigenfunctions $|\bm E_{0,k}|$ for three different eigenvalues. It is clear to see that the transmission eigenfunctions are surface-localized around the boundary.  In this case, the medium is anisotropic. The surface localization result here extends the theoretical statement of isotropic cases \cite{DLWW} to the anisotropic cases numerically. Besides, it is noted that $|\bm E_{0,k}|$ is nearly vanishing near the top of the gourd-shaped domain. This observation is consistent with the theoretical result that the transmission eigenfunctions must be nearly vanishing near the points of large curvature \cite{Blasten21}.

Next, we test the proposed reconstruction scheme with $1\%$ noise. Here we use the GTLS method with regularization parameter $\beta=10^{-4}$.
Figure \ref{fig:Gourd-MultiIndicator} presents iso-surface plots and slice plots of the multi-frequency indicator function, respectively. Here we use five different frequencies in interval $[1.5, 1.6]$.
In the slice-view plots, the white dashed lined are the boundary of the exact scatterer. One can find that the proposed method could approximately identify the boundary of the scatterer. 

\subsubsection{TE mode in 2D}
\begin{figure}
\hfill\subfigure[$k=1.5141$]{\includegraphics[width=0.33\textwidth]
                   {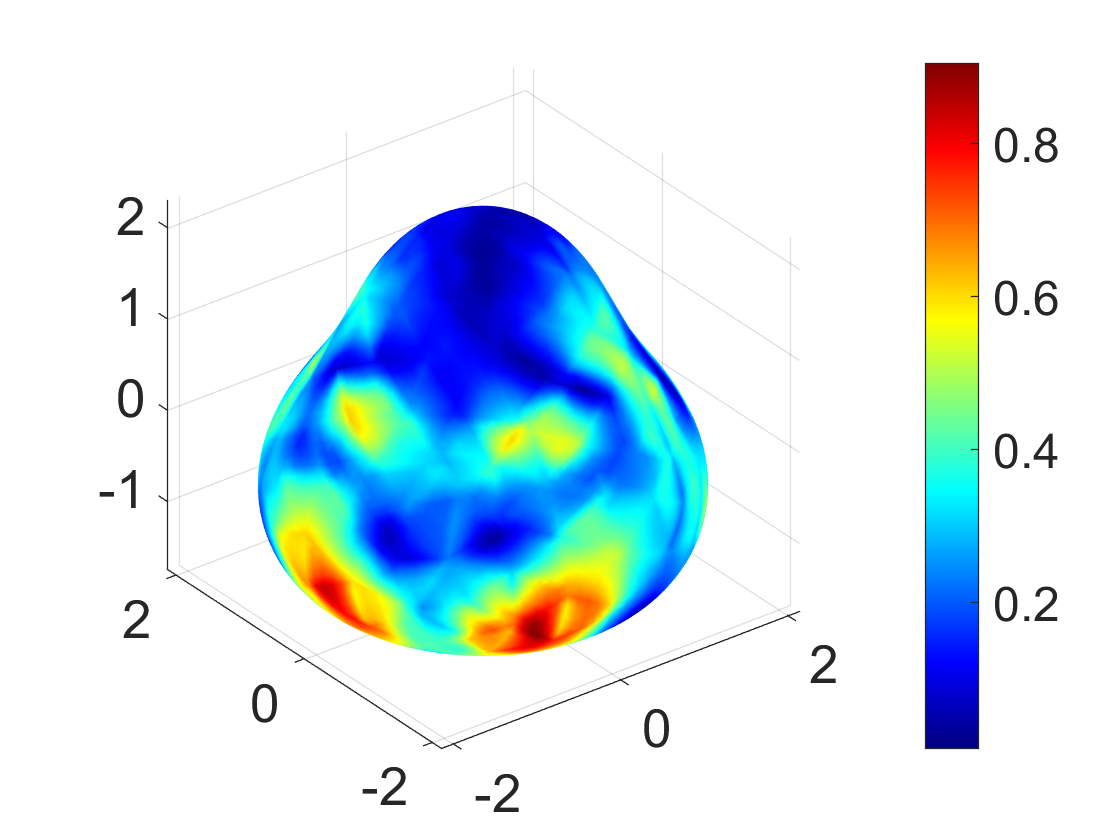}}\hfill
\hfill\subfigure[$k=1.5149$]{\includegraphics[width=0.33\textwidth]
                   {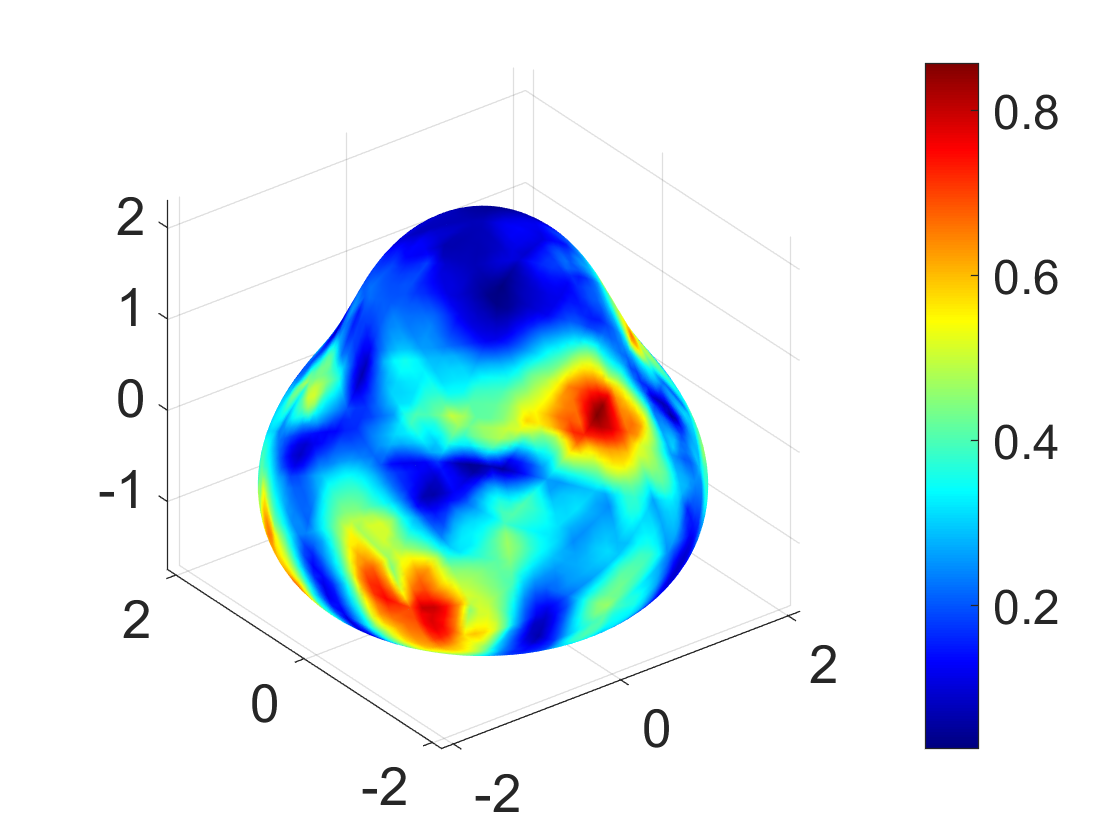}}\hfill
\hfill\subfigure[$k=1.5338$]{\includegraphics[width=0.33\textwidth]
                   {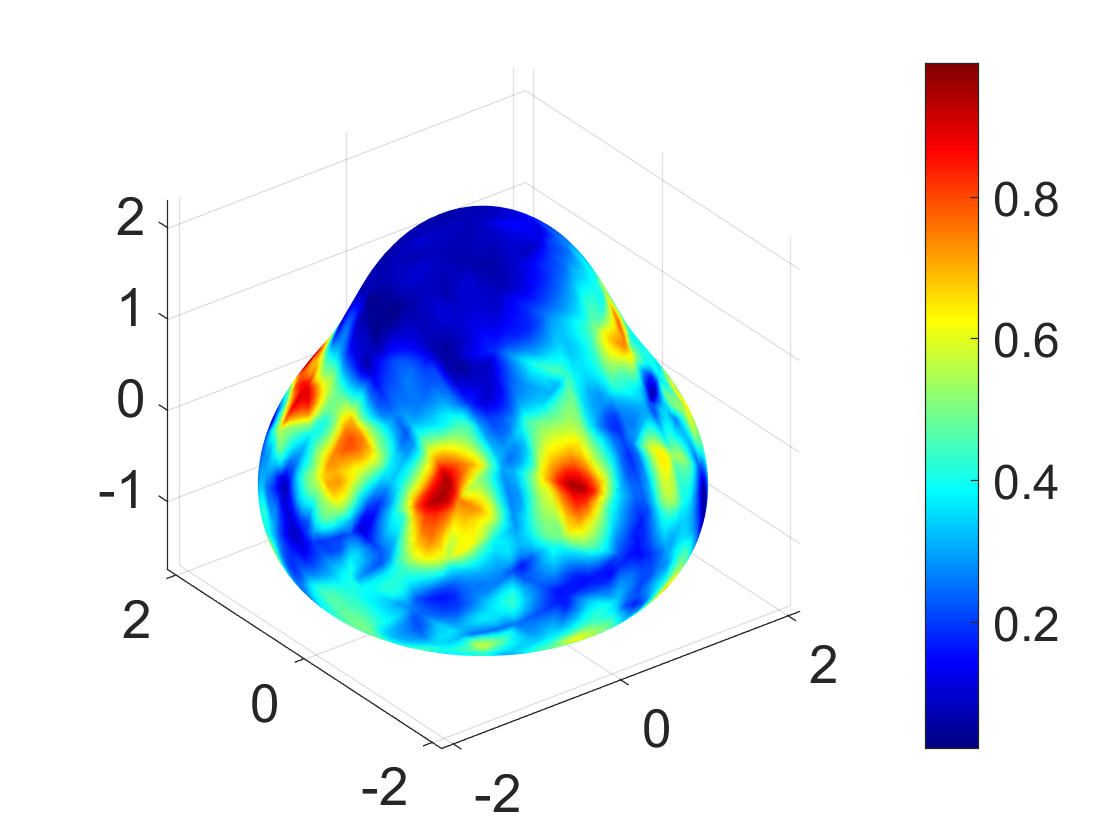}}\hfill\\
 \hfill\subfigure[$k=1.5141$]{\includegraphics[width=0.33\textwidth]
                   {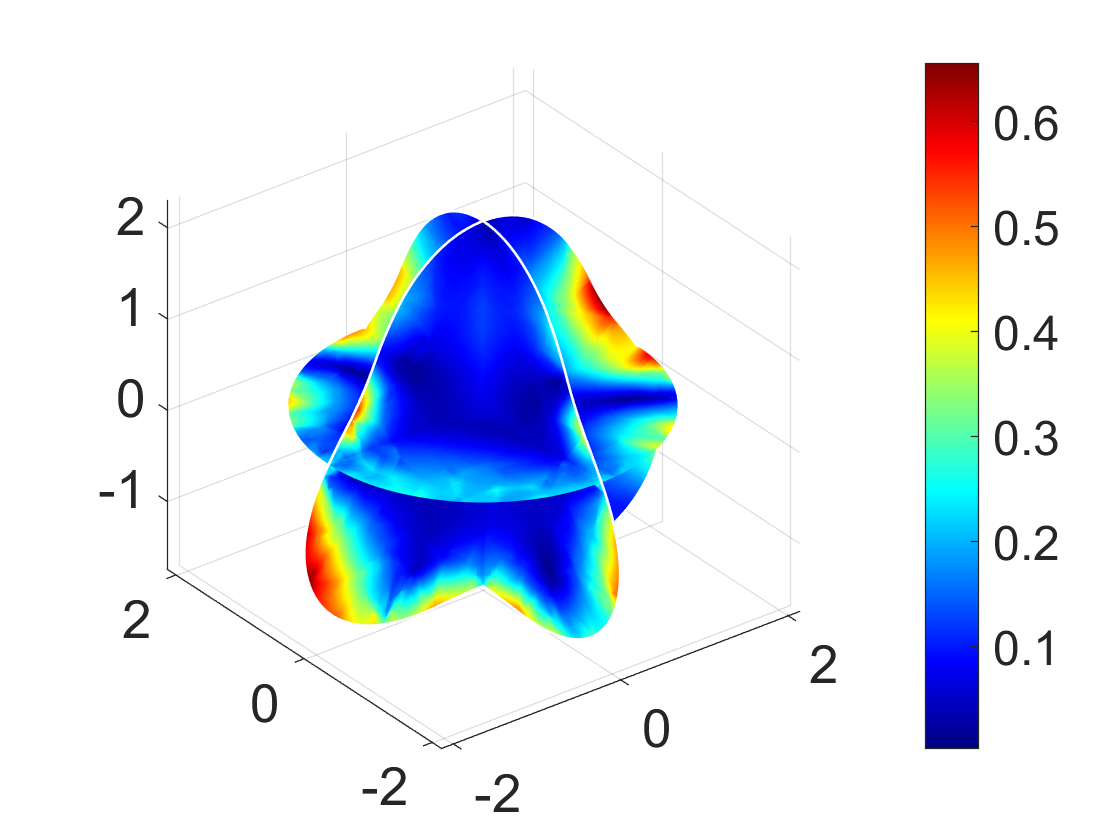}}\hfill
\hfill\subfigure[$k=1.5149$]{\includegraphics[width=0.33\textwidth]
                   {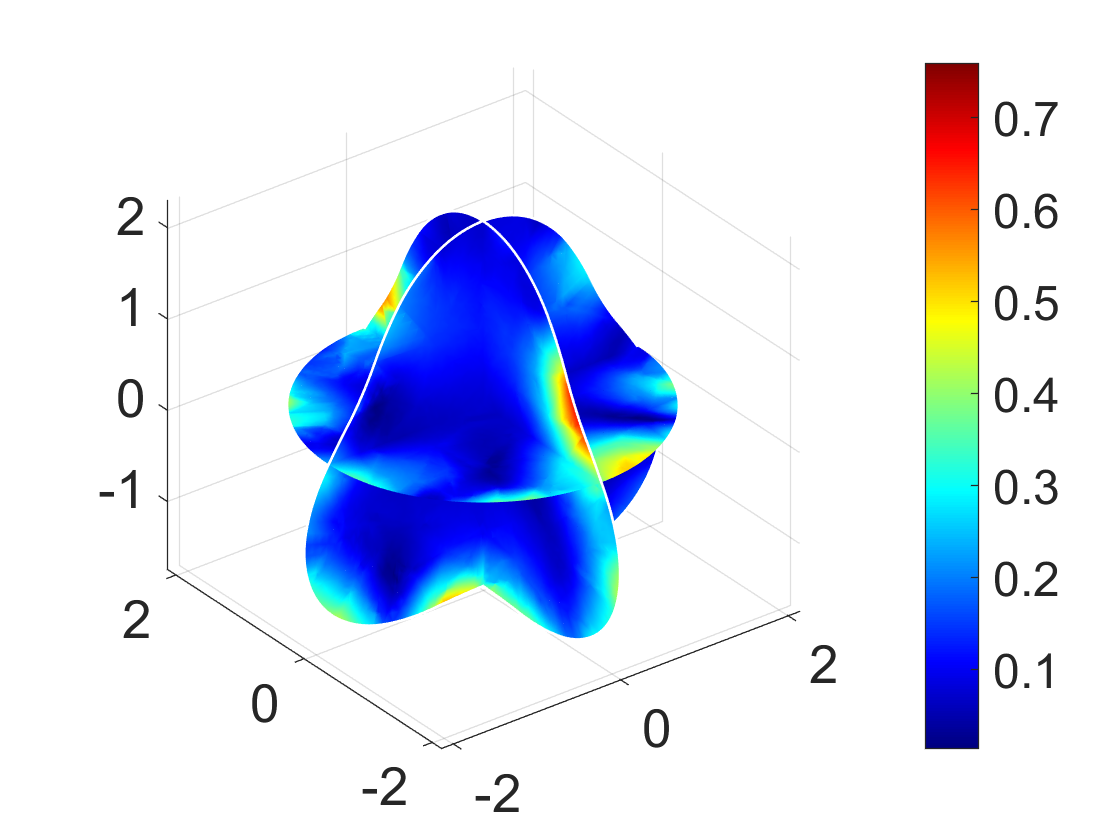}}\hfill
\hfill\subfigure[$k=1.5338$]{\includegraphics[width=0.33\textwidth]
                   {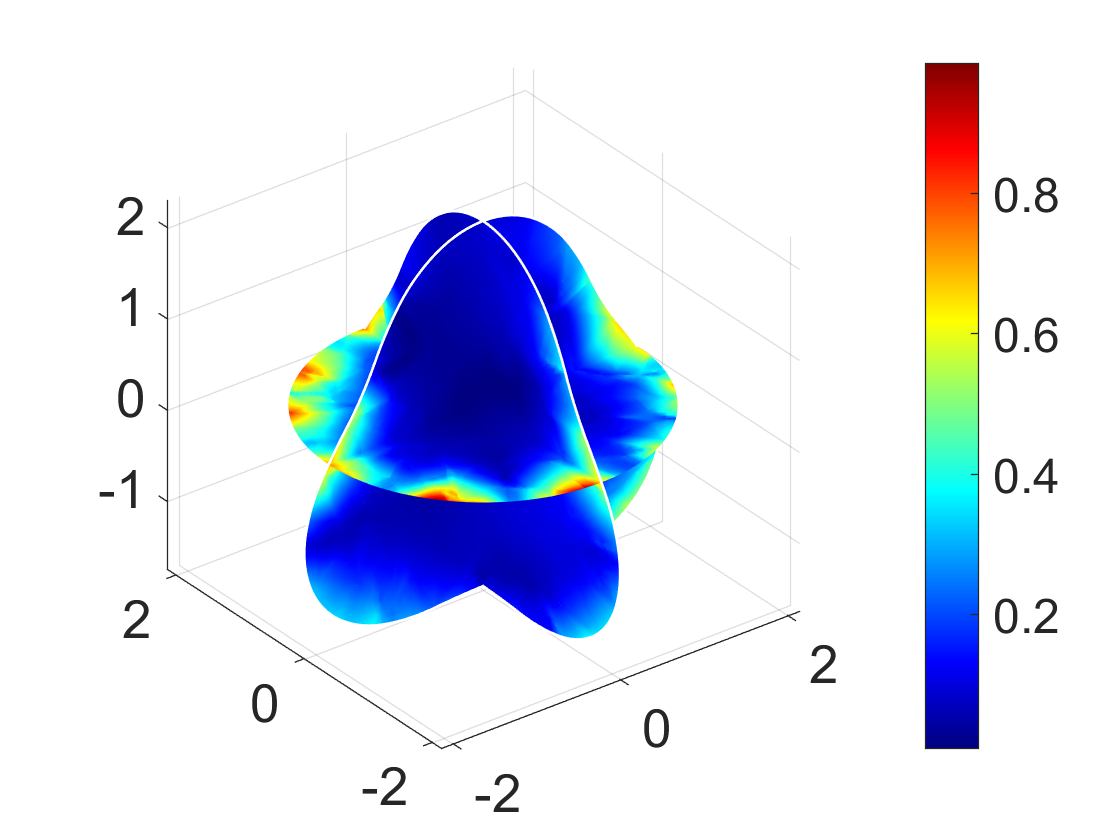}}\hfill
\caption{\label{fig:triangle-eigenfunction} Surface plots and multiple slices plots of the transmission eigenfunctions $|\bm E_{0,k}|$ with three different wavenumbers.  }

\end{figure}

\begin{figure}
\hfill\subfigure[isovalue $20$]{\includegraphics[width=0.33\textwidth]
                   {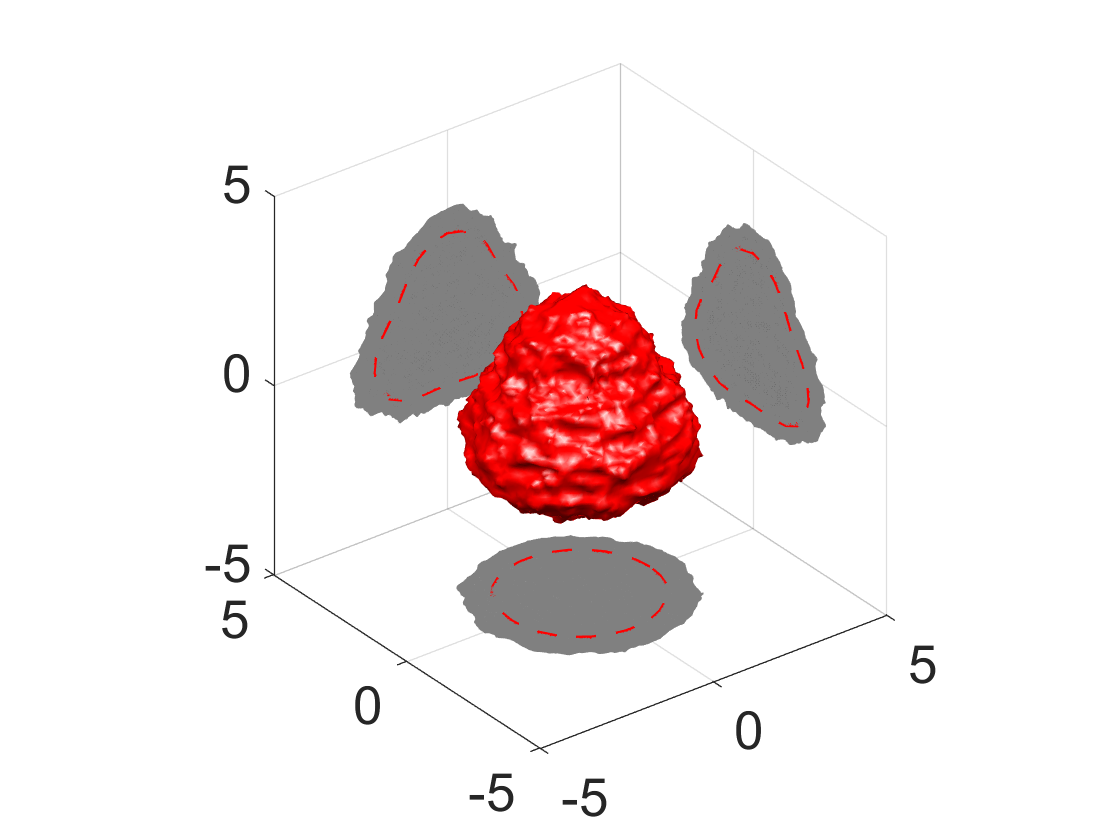}}\hfill
\hfill\subfigure[isovalue $30$]{\includegraphics[width=0.33\textwidth]
                   {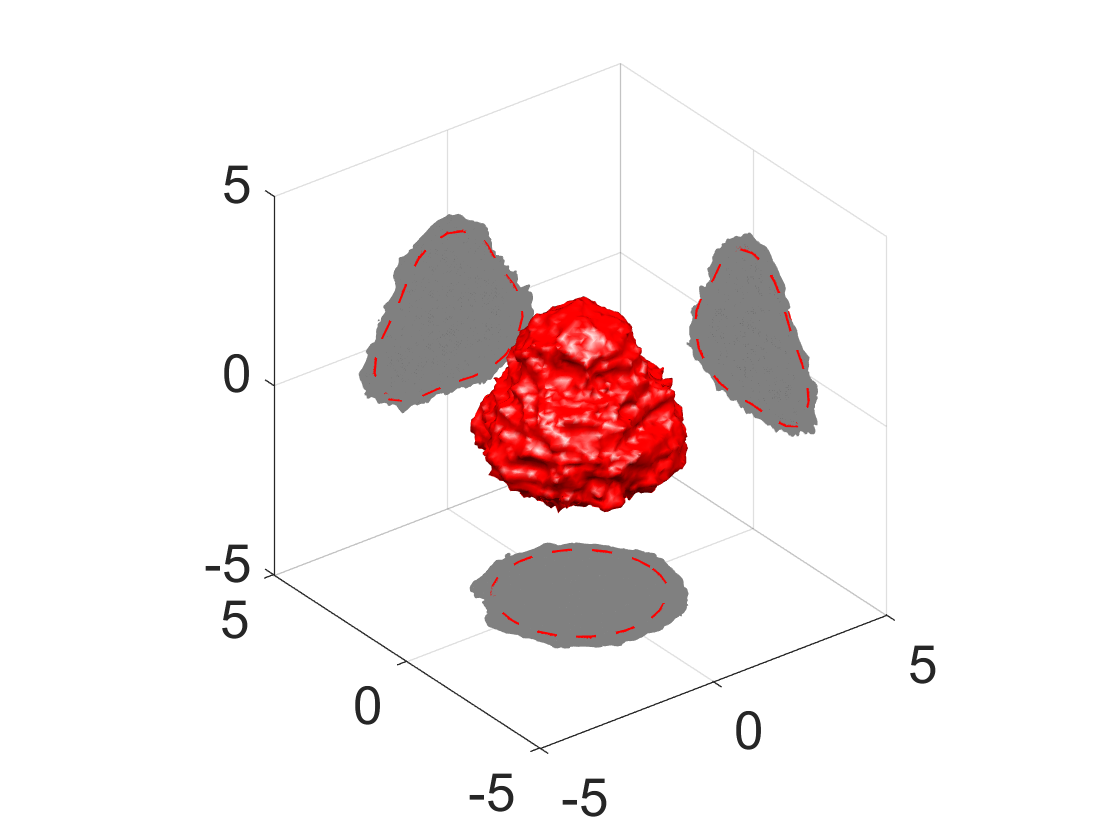}}\hfill
\hfill\subfigure[isovalue $40$]{\includegraphics[width=0.33\textwidth]
                   {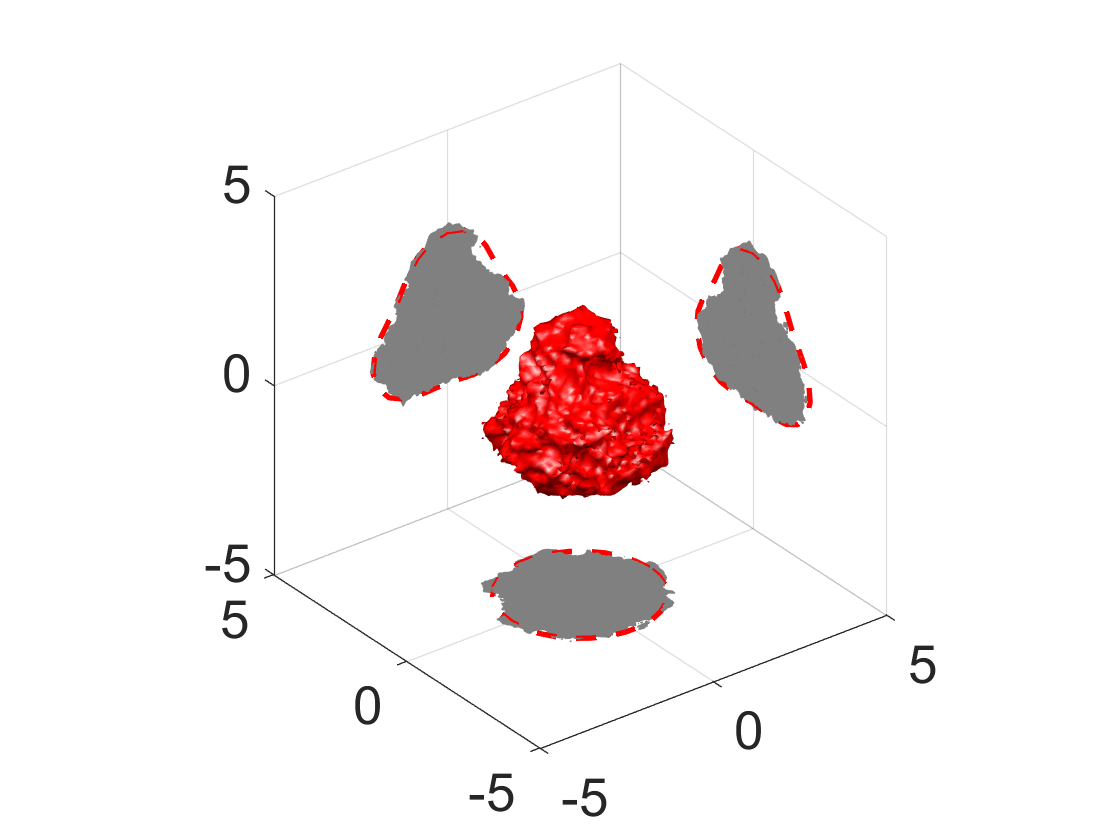}}\hfill\\
\hfill\subfigure[slice at $x=0$]{\includegraphics[width=0.33\textwidth]
                   {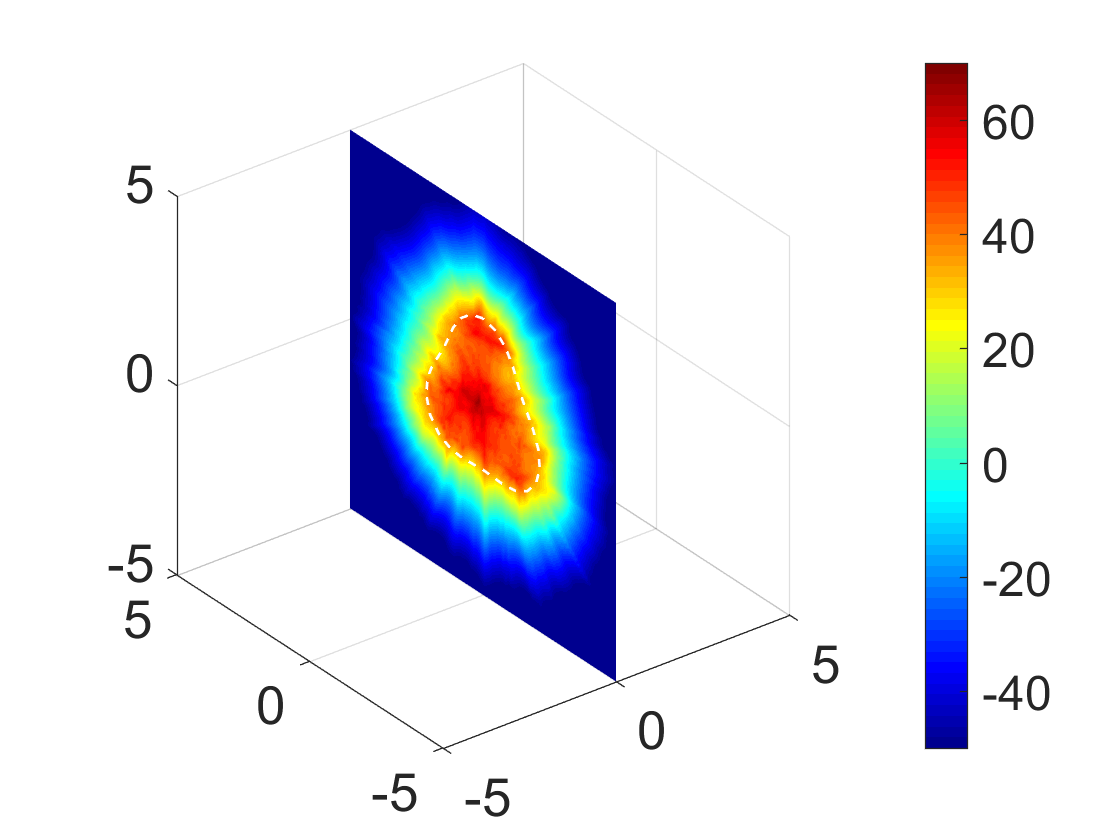}}\hfill
\hfill\subfigure[slice at $y=0$]{\includegraphics[width=0.33\textwidth]
                   {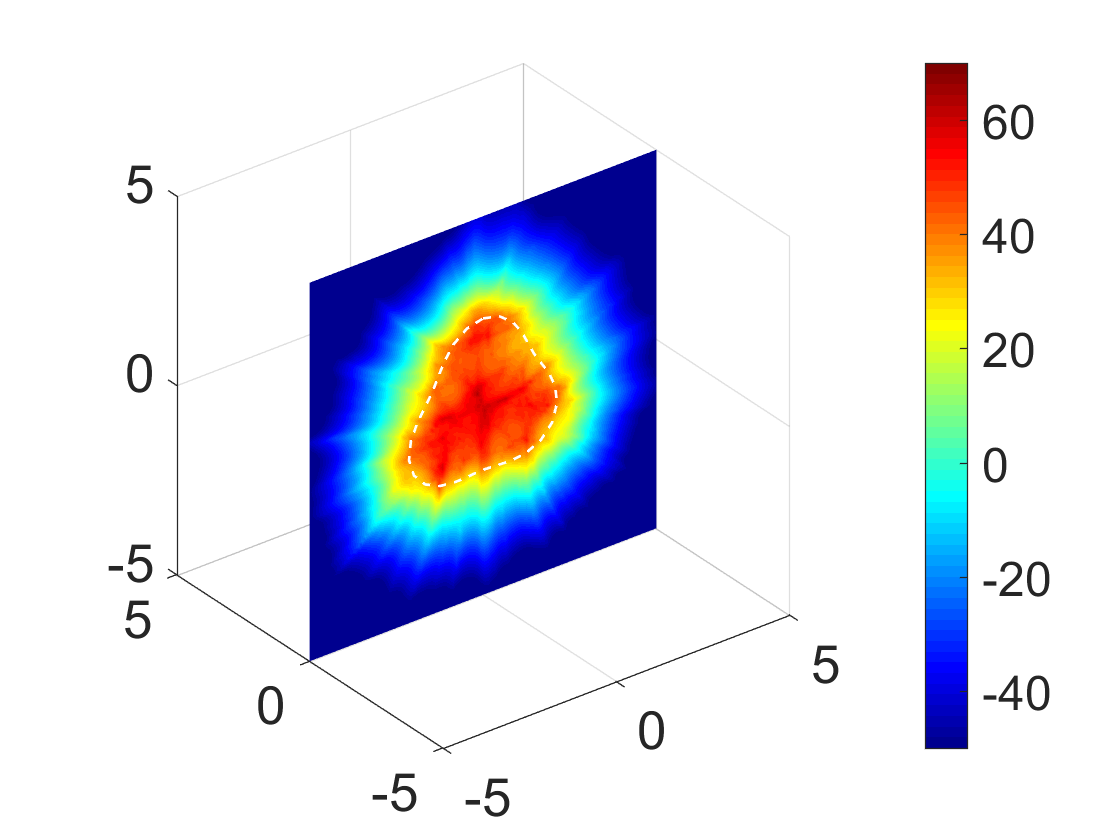}}\hfill
\hfill\subfigure[slice at $z=0$]{\includegraphics[width=0.33\textwidth]
                   {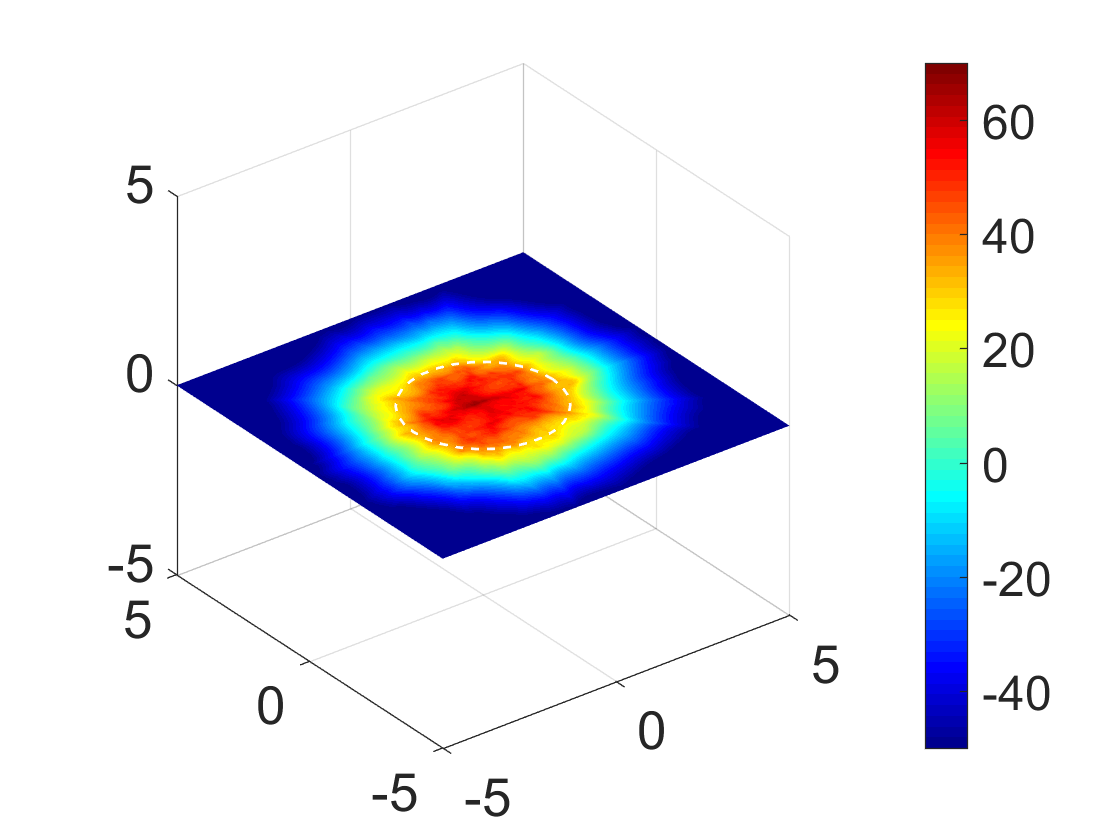}}\hfill
\caption{\label{fig:Gourd-MultiIndicator} Iso-surface plots and slice plots of multi-frequency indicator function $I_{\mathbb{K}_5}^{Res}$.  }

\end{figure}

In this example,  we consider the case that the scatterer coats a  thin layer of high refractive-index material.
It is worth mentioning that it requires heavy costs in computing the three-dimensional far-field data with high refractive index. Due to the limited computational resources, we could only afford to the two-dimensional calculation. Thus, we test the transverse electric (TE) mode in 2D.
The transmission eigenvalue problem \eqref{eq:elec_TEV_Pro} for TE mode is reduced to the following system:
\begin{equation*}\label{eq:TEreduce}
\begin{cases}
\underline{\nabla}\wedge(\nabla\wedge\mathbf{E})-k^2 \bm N \mathbf{E}=0,  \quad&\mbox{in}\ \ D,\medskip\\
\underline{\nabla}\wedge(\nabla\wedge\mathbf{E}_0)-k^2\mathbf{E}_0=0,   \quad&\mbox{in}\ \ D,\medskip\\
\bm \nu\wedge\mathbf{E}=\bm \nu\wedge\mathbf{E}_0, \ \underline{\bm \nu} \wedge (\nabla\wedge \mathbf{E})=\underline{\bm \nu} \wedge (\nabla\wedge\mathbf{E}_0) \quad & \mbox{on}\ \ \partial D,
\end{cases}
\end{equation*}
where
\begin{equation*}
\bm E:=\left(E^{(1)},\, E^{(2)}\right)^{\top}\quad   \text{and}\quad  \bm E_0:=\left(E_0^{(1)},\, E_0^{(2)}\right)^{\top},
\end{equation*}
and
\begin{equation*}
\nabla\wedge \mathbf{F}:=
 \partial_x  F^{(2)}-\partial_y   F^{(1)},\
 \underline{\nabla}\wedge  E:= \left(\begin{array}{c}
\partial_y  E \\
- \partial_x  E
\end{array}\right),\
\bm \nu \wedge \mathbf{F}:=
  \nu_1   F^{(2)}-\nu_2   F^{(1)},\
  \underline{\bm \nu}\wedge  E:= \left(\begin{array}{c}
\nu_2  E \\
- \nu_1  E
\end{array}\right).
\end{equation*}

Here we consider a kite-shaped domain with a thin layer, see Figure \ref{fig:Geometry}(b). The refractive indexes $\bm N$ of the outside layer and the inside domain are given by
\begin{equation*}
  \bm N_{out}(\bm x)=\left(
          \begin{array}{ccc}
            256 & 16  \\
          16& 256  \\
          \end{array}
        \right), \quad
  \bm N_{in}(\bm x)=\left(
          \begin{array}{ccc}
            2 & 0  \\
            0& 2  \\
          \end{array}
        \right).
\end{equation*}

Figure \ref{fig:kite_eigenfc} presents the exact transmission eigenfunctions in the predetermined domain. We can see from the refractive index $\bm N$ that this problem is associated with anisotropic media. In this case both transmission eigenfunctions $\bm E$ and $\bm E_0$ are localized around the boundary of the domain. This  example also numerically expands the theoretical result of isotropic cases \cite{DLWW}. The synthetic far-field data are computed within the interval $[1,2]$ with additional $1\%$ noise. First, we use Algorithm \uppercase\expandafter{\romannumeral1} to determine eight transmission eigenvalues. Next, we use the GTLS method with regularization parameter $\beta=10^{-6}$ to recover the Herglotz wave functions $E_{\bm g_0,k}$. We present the reconstruction results in Figure \ref{fig:kite_MultiIndicator} by using two, five and eight eigenmodes, respectively. One readily sees that the kite is already finely reconstructed with eight interior resonant modes. It is clear that the scale of the kite-shaped domain is much smaller than the underlying wavelength, $2\pi /k\approx 4$. This result shows that super-resolution reconstruction can be realized by the proposed imaging scheme. This is unobjectionably expected since we make use of the interior resonant modes for the reconstruction. Here, we want to point out that the contrast $\bm N$ just needed to be high around $\partial D$. Then small transmission eigenvalues can occur no matter the refractive index is large or normal inside the object. That means in real life, for a regular refractive inhomogeneity, one may first coat the object via indirect means with a relatively thin layer of high-contrast medium. For example, one could spray high-contrast material onto the surface of the scatterer. Then super-resolution imaging can be achieved by the same reconstruction procedure as above.

\begin{rem}
From Figure \ref{fig:Gourd-MultiIndicator} and \ref{fig:kite_MultiIndicator}, we can observe that the concave part of the scatterer can be reconstructed well. This is physically reasonable since the proposed method makes use of the interior resonant modes, which is equivalent to ``looking'' the scatterer from its inside. At this point, the concave part of the scatterer observed from the exterior becomes convex. 
\end{rem}

\subsubsection{TM mode in 2D}

\begin{figure}
\centering
\subfigure[$ E^{(1)}$]{\includegraphics[width=0.4\textwidth]
                   {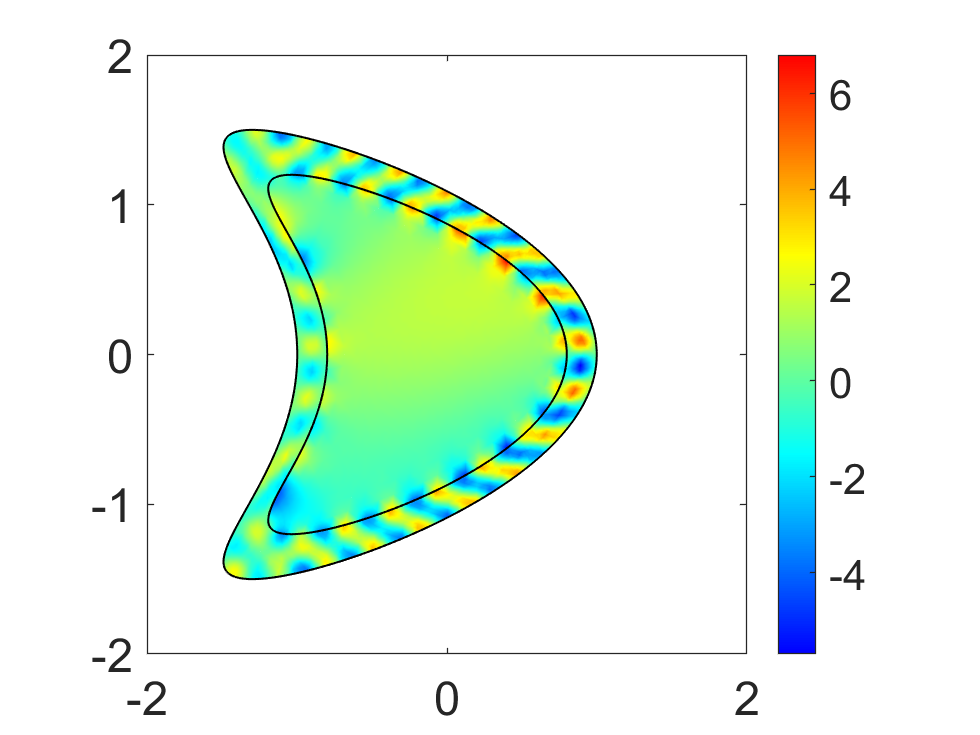}}
\subfigure[$ E^{(2)}$]{\includegraphics[width=0.4\textwidth]
                   {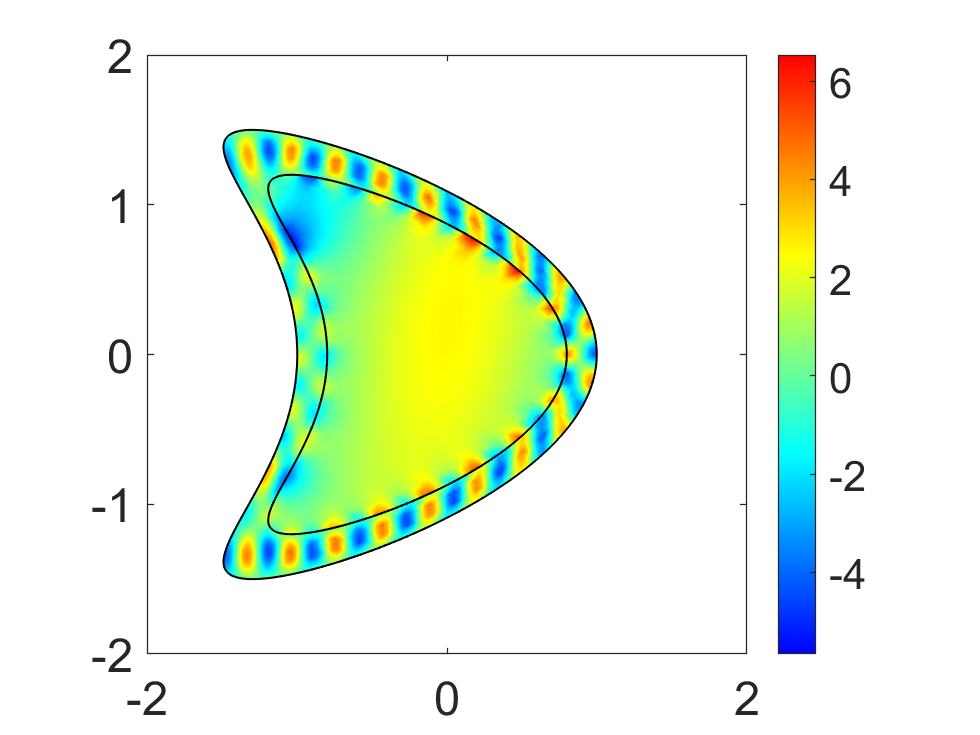}}\\
\subfigure[$ E_0^{(1)}$]{\includegraphics[width=0.4\textwidth]
                   {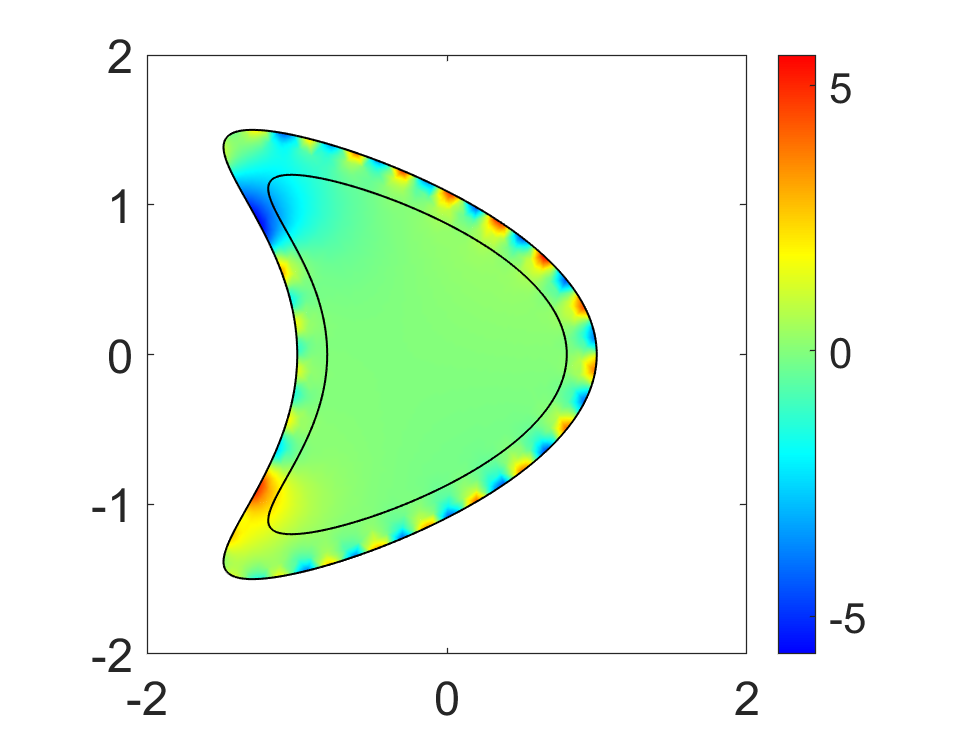}}
\subfigure[$ E_0^{(2)}$]{\includegraphics[width=0.4\textwidth]
                   {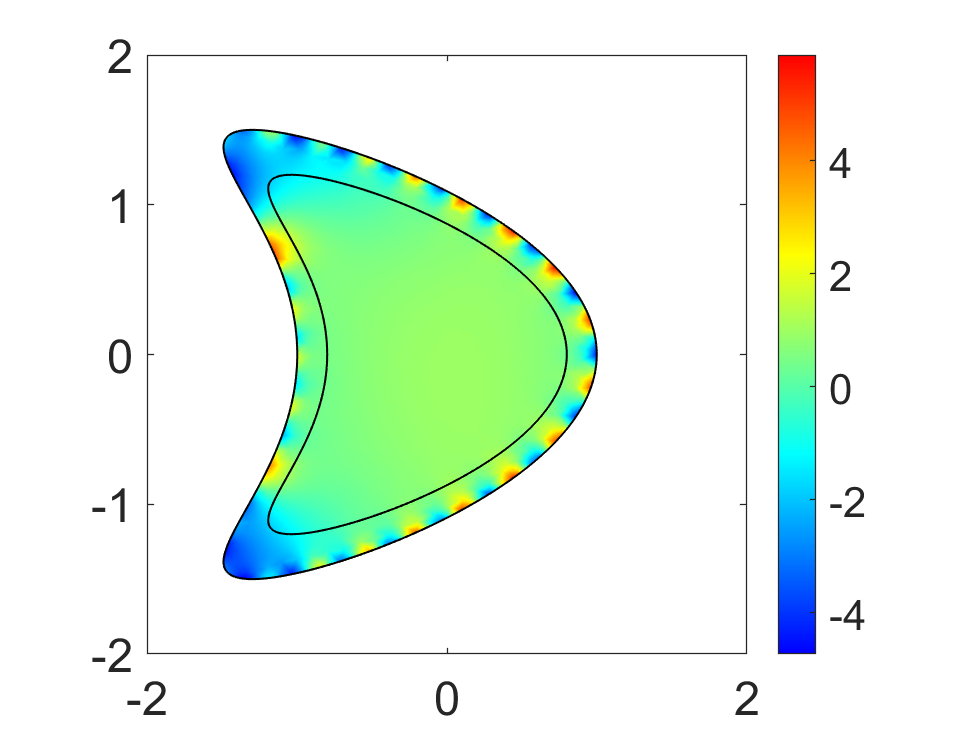}}
\caption{\label{fig:kite_eigenfc} Imagesc plots of the transmission eigenfunctions with $k=1.5098$. }

\end{figure}

\begin{figure}
\hfill\subfigure[$L=2$]{\includegraphics[width=0.33\textwidth]
                   {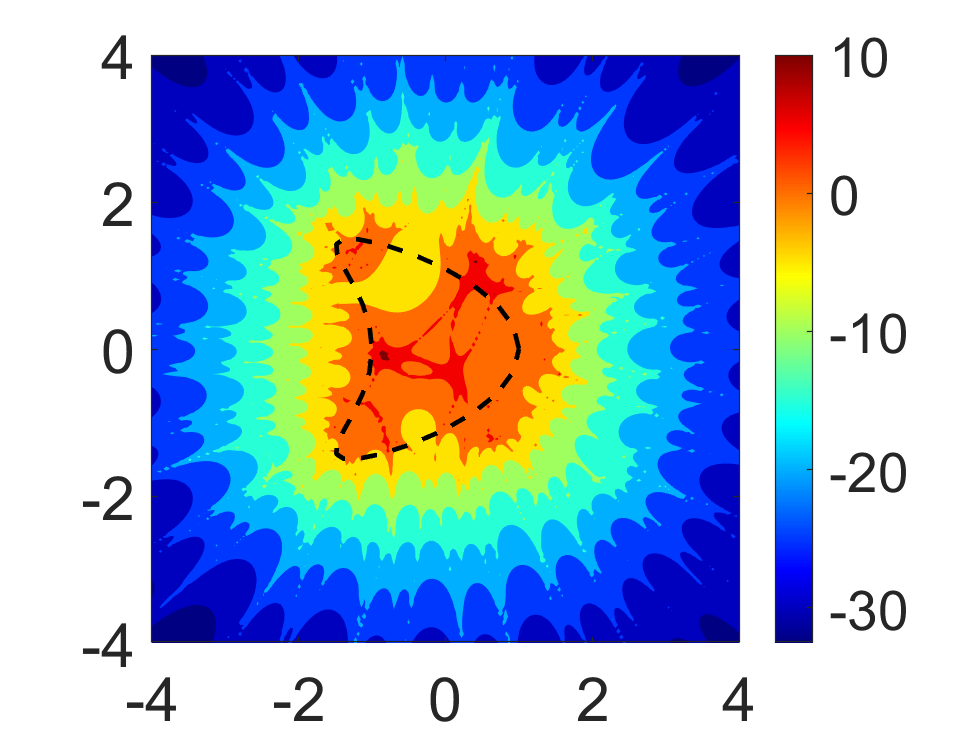}}\hfill
\hfill\subfigure[$L=5$]{\includegraphics[width=0.33\textwidth]
                   {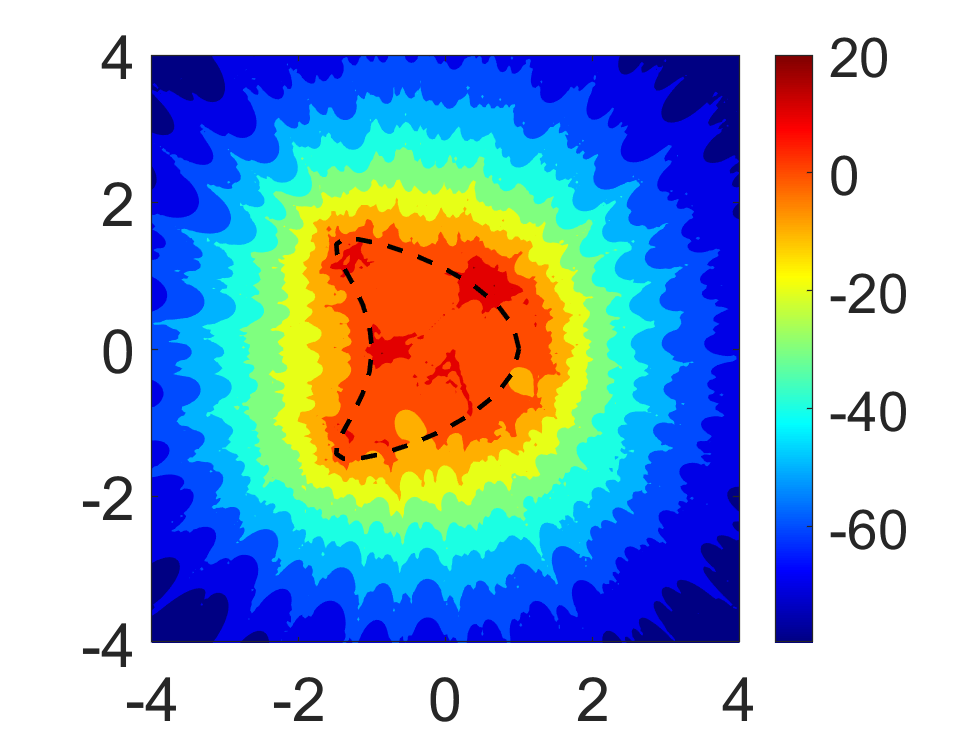}}\hfill
\hfill\subfigure[$L=8$]{\includegraphics[width=0.33\textwidth]
                   {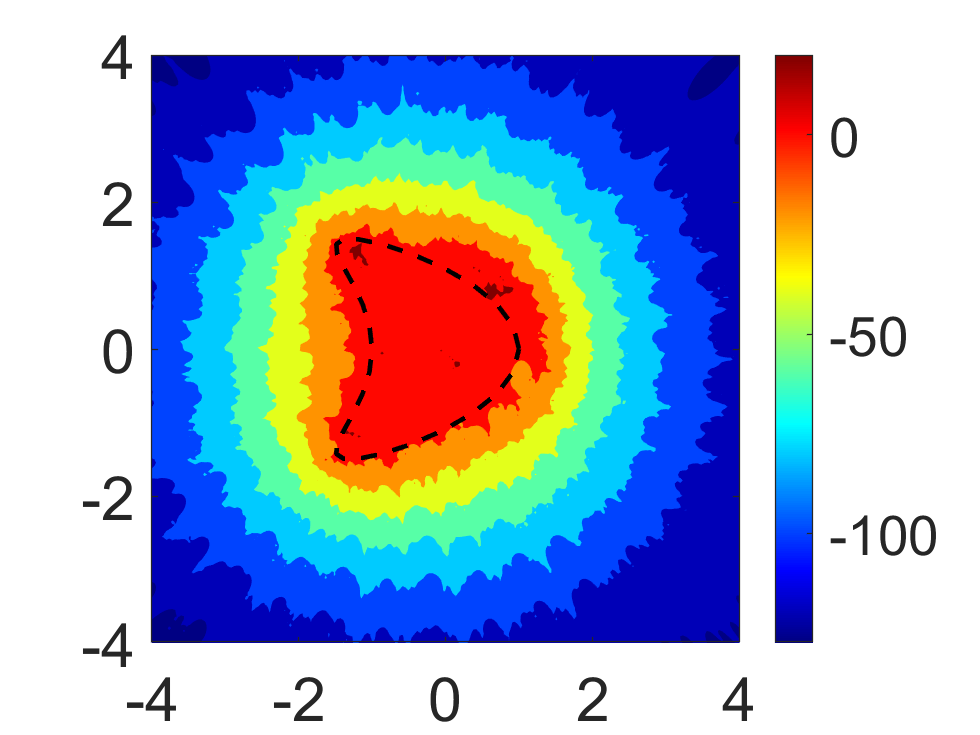}}\hfill\\
\caption{\label{fig:kite_MultiIndicator} Imagesc plots of multi-frequency indicator function $I_{\mathbb{K}_L}^{Res}$ with $L=2,\, 5, \, 8 $, respectively. }

\end{figure}

In the final example, we use the local geometric properties of transmission eigenfunctions to image the scatterer.  To reduce the computational cost,  we consider  the  transverse magnetic (TM) mode in 2D. The transmission eigenvalue problem \eqref{eq:elec_TEV_Pro} for  TM mode is reduced to the following system:

\begin{equation}\label{eq:TM2D}
\begin{cases}
\nabla\wedge(\underline{\nabla}\wedge E)-k^2 n(\bm x) E=0, \quad&\mbox{in}\ \ D,\medskip\\
\nabla\wedge(\underline{\nabla}\wedge E_0)-k^2 E_0=0, \quad&\mbox{in}\ \ D,\medskip\\
 \underline{\bm \nu} \wedge E= \underline{\bm \nu} \wedge E_0, \ \bm \nu\wedge (\underline{\nabla}\wedge E)=\bm \nu \wedge( \underline{\nabla}\wedge E_0)  \quad & \mbox{on}\ \ \partial D,
\end{cases}
\end{equation}
where the refractive index $n(\bm x)=1/4$. 
In particular, the system \eqref{eq:TM2D} can be rewritten as the following transmission eigenvalue problem associated with the scalar Helmholtz equation:
\begin{equation*}
\left\{
\begin{array}{ll}
\Delta E+k^2 n(\bm x)E=0  &\text{in} \ D,\medskip \\
\Delta E_0+k^2 E_0 =0 &\text{in} \ D, \medskip \\
\displaystyle{E=E_0,\ \ \frac{\partial E}{\partial \bm \nu}=\frac{\partial E_0}{\partial \bm \nu} } &\text{on} \ \partial D.
\end{array}
\right.
\end{equation*}
Here, we let $D$ be a square defined by $(x,y)\in [-1, 1]\times[-1, 1]$. The exact domain is shown in Figure \ref{fig:Geometry}(c).

\begin{figure}
\hfill\subfigure[$k_1=5.48$]{\includegraphics[width=0.33\textwidth]
                   {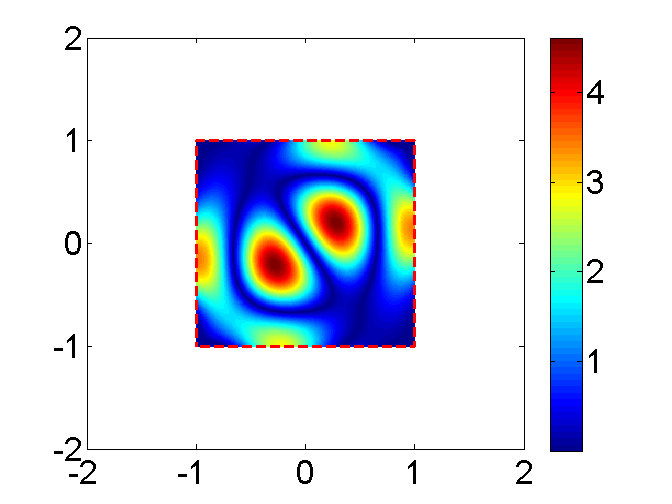}}\hfill
\hfill\subfigure[$k_2=6.10$]{\includegraphics[width=0.33\textwidth]
                   {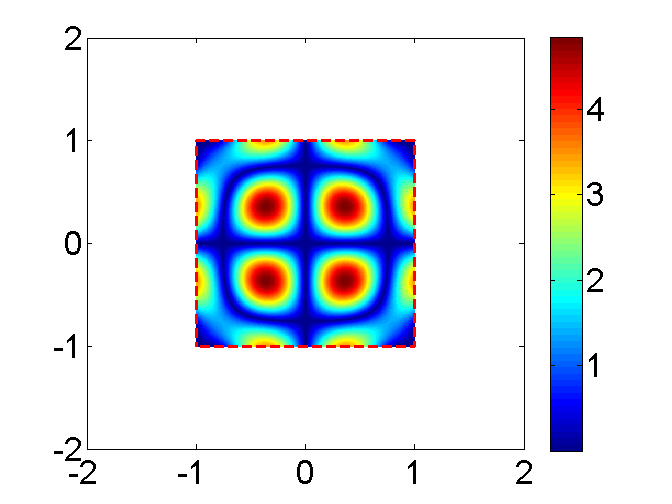}}\hfill
\hfill\subfigure[$k_3=6.65$]{\includegraphics[width=0.33\textwidth]
                   {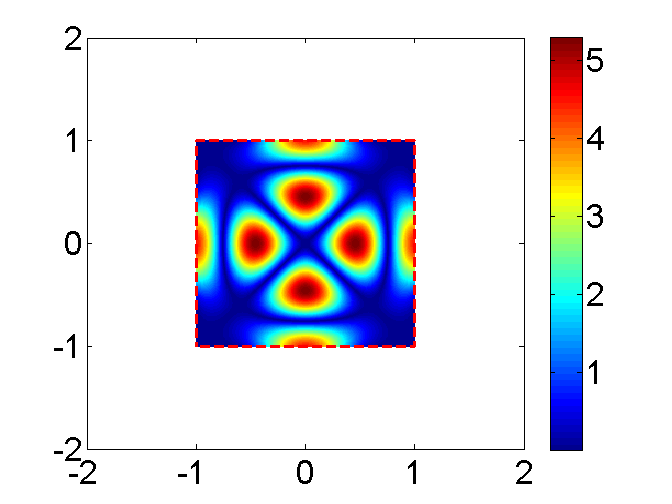}}\hfill\\
\hfill\subfigure[$k_1=5.40$]{\includegraphics[width=0.33\textwidth]
                   {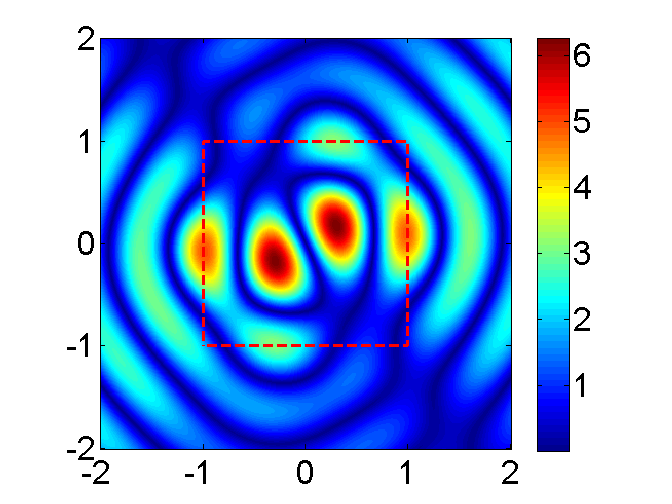}}\hfill
\hfill\subfigure[$k_2=6.11$]{\includegraphics[width=0.33\textwidth]
                   {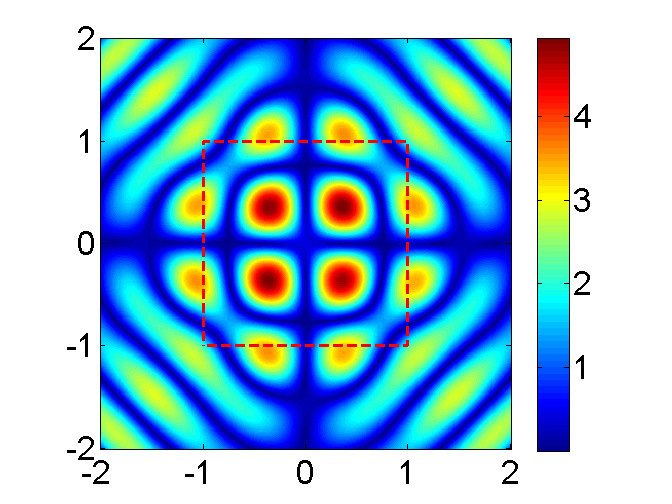}}\hfill
\hfill\subfigure[$k_3=6.67$]{\includegraphics[width=0.33\textwidth]
                   {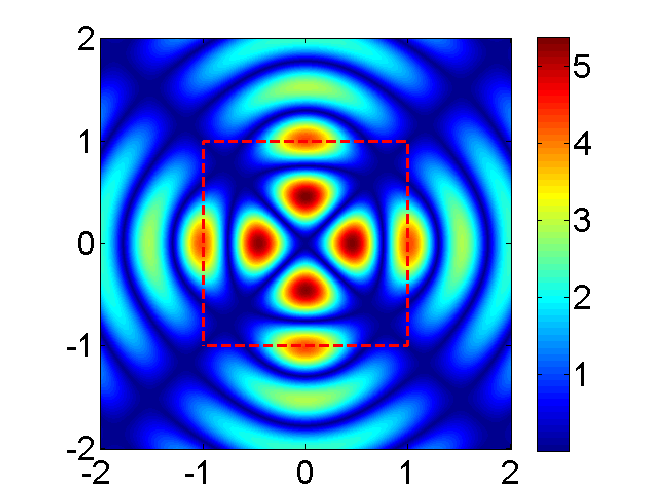}}\hfill\\
\hfill\subfigure[$k_1=5.40$]{\includegraphics[width=0.33\textwidth]
                   {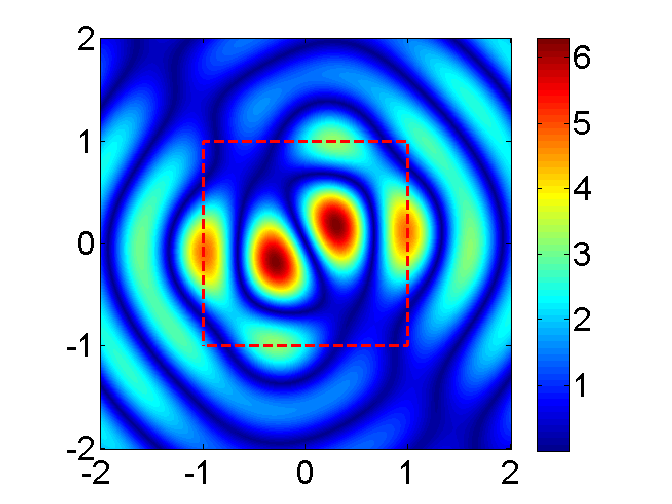}}\hfill
\hfill\subfigure[$k_2=6.11$]{\includegraphics[width=0.33\textwidth]
                   {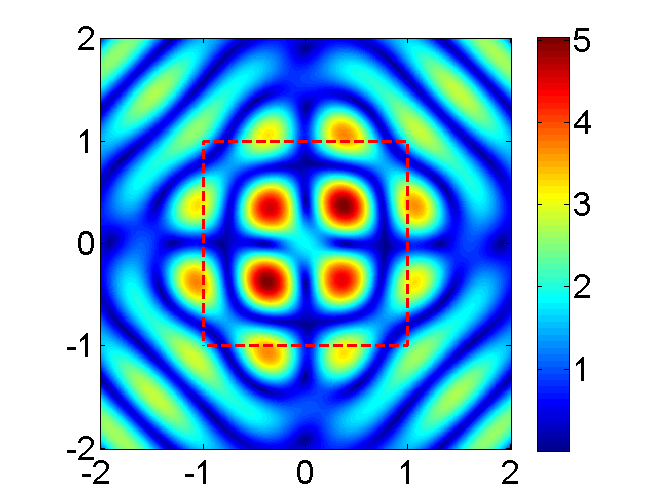}}\hfill
\hfill\subfigure[$k_3=6.67$]{\includegraphics[width=0.33\textwidth]
                   {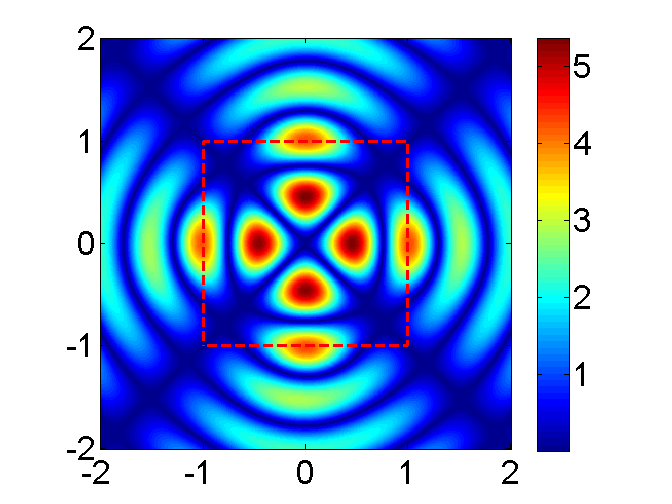}}\hfill
\caption{\label{fig:square_Vg} The top row: transmission eigenfunctions $E_{0,k}$ with the first three $k$; the middle row: the Herglotz wave recovered by FTLS method;
the bottom row: the Herglotz wave recovered by GTLS method.}
\end{figure}


\begin{figure}[h]
\hfill\subfigure[$L=2$]{\includegraphics[width=0.33\textwidth]
                   {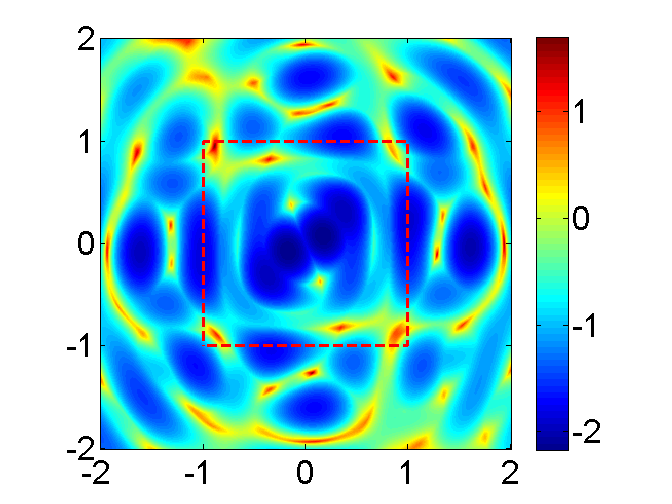}}\hfill
\hfill\subfigure[$L=4$]{\includegraphics[width=0.33\textwidth]
                   {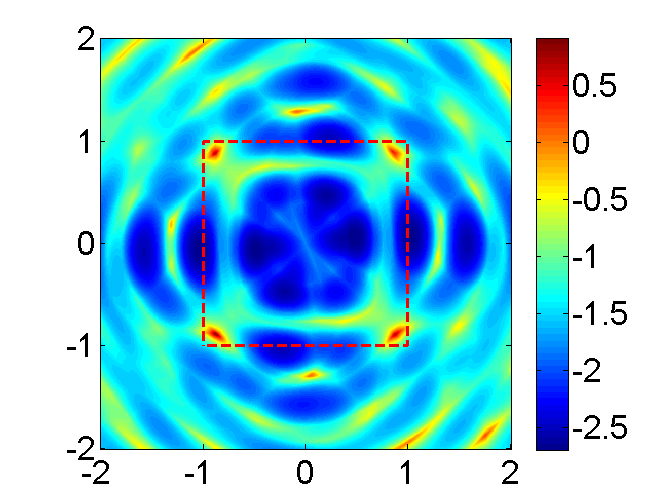}}\hfill
\hfill\subfigure[$L=6$]{\includegraphics[width=0.33\textwidth]
                   {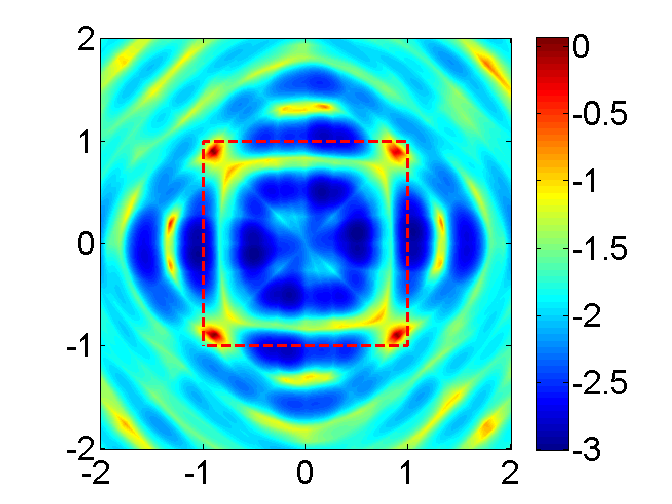}}\hfill\\
\hfill\subfigure[$L=2$]{\includegraphics[width=0.33\textwidth]
                   {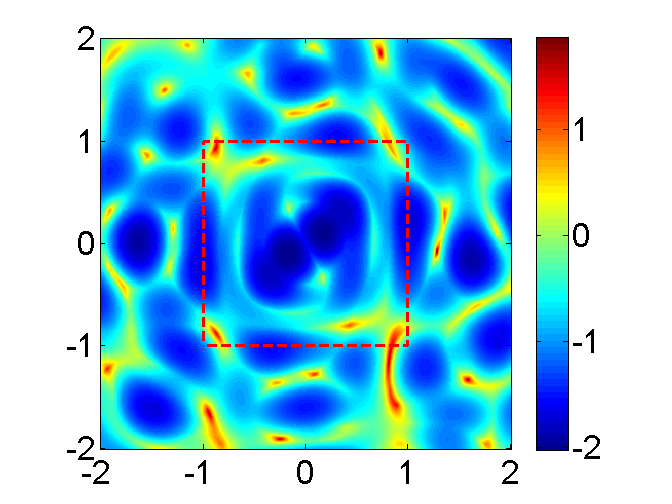}}\hfill
\hfill\subfigure[$L=4$]{\includegraphics[width=0.33\textwidth]
                   {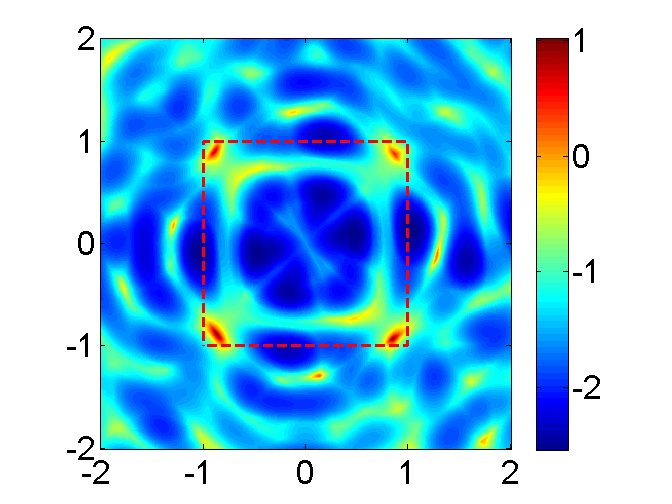}}\hfill
\hfill\subfigure[$L=6$]{\includegraphics[width=0.33\textwidth]
                   {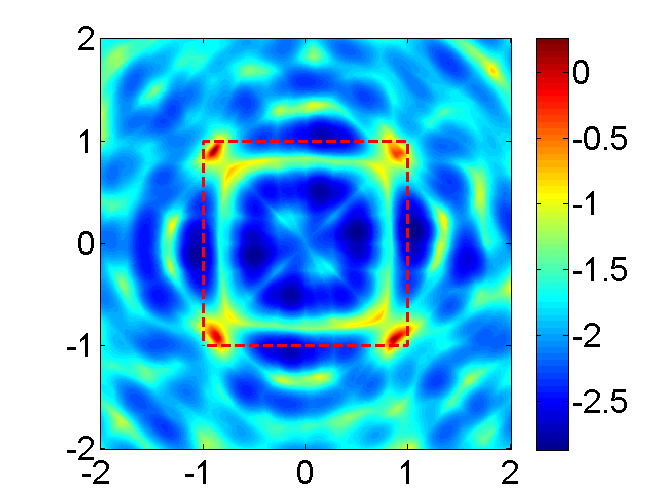}}\hfill\\
\caption{\label{fig:square_MultiIndicator} Imagesc plots of multi-frequency indicator function $I_{\mathbb{K}_L}^{Res}$ with $L=2,\, 4, \, 6 $, respectively. The top rows: recovered by the FTLS method ; the bottom rows: recovered by the GTLS method.  }

\end{figure}

To begin with, we proceed to determine  the transmission eigenfunctions $E_{0,k}$.  We consider the square domain as discussed above and in order to avoid inverse crimes, extra $5\%$ noise is added to the far-field data associated with those eigenvalues. For comparison, we use the FEM to compute the exact eigenfunction of $E_{0,k}$ from the exact domain, see the top row of Figure \ref{fig:square_Vg}.
In addition, the middle and bottom rows of Figure \ref{fig:square_Vg} present the Herglotz wave functions $E_{ {\bm g}_0,k}$ recovered by the FTLS and GTLS methods, respectively.  For the FTLS method, we take the Fourier truncation order by $N_t= 6$. For the GTLS method, the regularization parameter is chosen as $\beta=10^{-1}$. The dashed red lines denote the exact support of the square scatterer. It is clear that the reconstructed Herglotz wave functions by both methods are close to the exact transmission eigenfunctions $E_{0,k}$ inside the scatterer. This numerically verifies the conclusion in section 3. Moreover, from the top row of Figure \ref{fig:square_Vg}, one can find that the eigenfunctions are vanishing near the corners, which verifies the previous theoretical study \cite{BlastenA, Blasten17, BlastenLX}. In particular,  we can observe that the reconstructed eigenfunctions are also nearly vanishing near the corner points rather than at the corners. This is because that recovering the Herglotz kernel function  $\bm {g}_0$ from the far-field data is very ill-posed. 


Although the nodal lines of eigenfunctions appear in different locations for different eigenvalues (cf. \cite{CDLZ19, CDLZ20}), the nodal lines always go through the corners. So, the corners of the domain will stand out if we superimpose the indicator function with multi-frequency far-field data. Figure \ref{fig:square_MultiIndicator} presents the multi-frequency indicator function \eqref{eq:multi-indicator} with the first two, four and six eigenvalues with $5\%$ noise.  In this case, the truncated Fourier order is given by $N_t=8$ for the FTLS method and the regularization parameter is chosen as $\beta=10^{-3}$ for the GTLS method. One readily sees that the corner points are finely reconstructed with four interior resonant modes. If further a priori information is available on the shape, say, it is a polygon, then one can actually recover the scatterer. This example also shows that the proposed imaging scheme can break the Abbe resolution limit in recovering the fine details of $\partial D$, even the singular points.

\begin{rem}
It is worth to mention that the proposed method is valid for reconstructing an electromagnetic medium scatterer no matter if the medium is isotropic or anisotropic. In addition, the aforementioned  numerical experiments demonstrate that super-resolution reconstruction can be realized by the proposed imaging scheme.
\end{rem}


\noindent {\bf Acknowledgments.} 
The authors would like to thank Professor Wei Wu of Jilin University for providing many helpful discussions during the completion of this paper. The work was supported by the Hong Kong RGC General Research Fund (projects 12301420, 12302919, 11300821) and NSFC/RGC Joint Research Grant (project N\_CityU101/21).

\end{document}